\newcommand{\EQ}{\begin{eqnarray}}
\newcommand{\EN}{\end{eqnarray}}
\newcommand{\EQQ}{\begin{eqnarray*}}
\newcommand{\ENN}{\end{eqnarray*}}
\newcommand{\col}{\mbox{col}}
\newcommand{\diag}{\mbox{diag}}
\newtheorem{thm}{Theorem}
\newtheorem{lem}{Lemma}
\newtheorem{rem}{Remark}
\newtheorem{defi}{Definition}
\newtheorem{prop}{\it Property}
\newtheorem{ass}{Assumption}
\newenvironment{proof}{\noindent{\em Proof:\/}}{\hfill $\Box$\par}
\newcommand{\myr}{}
\begin{document}
\begin{sloppypar}
\begin{frontmatter}

\title{Extremum Seeking Nonlinear Regulator with Concurrent Uncertainties in Exosystems and Control Directions}

\tnotetext[t1]{This work was supported in part by NSERC and in part by NSFC under grant No. 62073168. This paper has not been presented at any conference.}% 
 
\author[Wang]{Shimin Wang}\ead{bellewsm@mit.edu}
\author[Martin]{Martin Guay}\ead{guaym@queensu.ca}
%\author[Xu]{Dabo Xu}\ead{dxu@scut.edu.cn}
\author[Denis]{Denis Dochain}\ead{denis.dochain@uclouvain.be}

\address[Wang]{Massachusetts Institute of Technology, Cambridge,
MA 02142, USA}
\address[Martin]{Department of Chemical Engineering, Queen's University, Kingston, Ontario, K7L 3N6, Canada}
%\address[Xu]{Shien-Ming Wu School of Intelligent Engineering, South China University of Technology, Guangzhou 511442, China}
\address[Denis]{ICTEAM, UCLouvain, Bâtiment Euler, avenue Georges lemaître 4-6, 1348 Louvain-la-Neuve, Belgium}

\begin{abstract}This paper proposes a non-adaptive control solution framework to the practical output regulation problem (PORP) for a class of nonlinear systems with uncertain parameters, unknown control directions and uncertain exosystem dynamics. 
The concurrence of the unknown control directions and uncertainties in both the system dynamics and the exosystem pose a significant challenge to the problem. 
In light of a nonlinear internal model approach, we first convert the robust PORP into a robust non-adaptive stabilization problem for the augmented system with integral Input-to-State Stable (iISS) inverse dynamics.
By employing an extremum-seeking control (ESC) approach, the construction of our solution method avoids the use of Nussbaum-type gain techniques to address the robust PORP subject to unknown control directions with time-varying coefficients.
The stability of the non-adaptive output regulation design is proven via a Lie bracket averaging technique where uniform ultimate boundedness of the closed-loop signals is guaranteed.
{\myr As a result, the practical output regulation problem can be solved using the proposed non-adaptive and non-Nussbaum-type framework.
Moreover, both the estimation and tracking errors uniformly asymptotically converge to zero, provided that the frequency of the dither signal goes to infinity.}
Finally, a simulation example with unknown coefficients is provided to exemplify the validity of the proposed control solution frameworks.
\end{abstract}
% integral Input-to-State Stable (iISS) inverse dynamics, 
\begin{keyword}Output Regulation, Extremum Seeking, Nonlinear Systems, Unknown Control Direction, Approximation Method, Learning-based Control, Non-adaptive Control Design\end{keyword}
\end{frontmatter}
%\footnotetext{This work is supported by the Natural Sciences and Engineering Research Council of Canada (NSERC).}

\section{Introduction}
The control problem of output regulation or servomechanism aims at achieving asymptotic tracking of reference signals while rejecting the steady-state effect of disturbances \citep*{isidori1990output,isidori2003robust,huang2004nonlinear,bin2020model,wang2022robust}.
In particular, a comprehensive framework in \cite{marconi2008uniform} has been presented for asymptotically solving the PORP.
As in \cite{marconi2008uniform,liu2009parameter}, for instance, most existing studies require knowledge of the control direction {\it a priori}. 
The control directions naturally play an essential role in solving output regulation problems for both linear and nonlinear systems \citep*{liu2008global}. 
A wrong control direction can force the output regulation error of the feedback control systems to drift away from the desired control objective \citep*{chen2019nussbaum}.
Furthermore, many practical applications have stimulated the investigation of output regulation problems subject to unknown control directions with time-varying coefficients, such as the autopilot design of unmanned autonomous surface vessels in \cite{wang2015prescribed}. %and adaptive PID fault-tolerant control for $3$-link robotic arm in \cite{song2017robust}, in which the feedback control systems might suffer from not only unknown high-frequency gain but also their gain signs. 
The formation and station-keeping control of multiple networked autonomous high-altitude balloons use stratospheric wind currents as propulsion to move forward and navigate; however, the stratospheric wind currents are unknown, time-varying, and unpredictable \citep*{vandermeulen2017distributed}.
In a recent study, \cite{dibo2023extremum}, the requirement for the knowledge of the Hessian sign information for the design of an extremum-seeking controller was alleviated using a switching monitoring function-based scheme.

The main objective of the present study focuses on a class of output-feedback uncertain nonlinear systems subject to unknown control directions in the following form, previously investigated in \cite{liu2009parameter}:
\begin{subequations}\label{multirigda}
\begin{align}
\dot{z}&=F(w)z+G(y,v,w)y+D_1(v,w),\\
\dot{y}&=H(w)z+K(y,v,w)y+b(w,v)u+D_{2}(v,w),\\
e&=y-q(v,w),\nonumber
\end{align}\end{subequations}
where $\col(z,y)\in \mathds{R}^{n_{s}}$ is the whole system state, $y\in \mathds{R}$ is the system output, $u\in\mathds{R}$ is the control input, $w\in \mathds{R}^{n_w}$ collects the uncertain parameters or parametric uncertainties, $b(w,v)$ is continuous in its arguments, satisfying
\begin{equation*}
{\myr b(w,v)^{2} }>0,~~ \forall w\in \mathds{R}^{n_w},
\end{equation*}
$e\in \mathds{R}$ is the error output, and $v\in \mathds{R}^{n_v}$ is the exogenous signal representing the reference input to be tracked or disturbance to be rejected. The matrix $F(w)$ is Hurwitz for all $w\in \mathds{R}^{n_w}$. The signal $v$ is assumed to be generated by the following uncertain exosystem:
\begin{align}\label{leader}
  \dot{v}=&S(\sigma)v,
\end{align}
where $\sigma\in \mathds{S}\subset \mathds{R}^{n_{\sigma}}$ represents the unknown parameters, and $q(v,w)\in \mathds{R}$ is the output of the exosystem. We assume that all the functions in system \eqref{multirigda} are sufficiently smooth satisfying $$D_1(0,w)=0,\ D_2(0,w)=0\ \textnormal{and}\ q(0,w)=0,\ \forall w\in \mathds{R}^{n_w}.$$

Multiple versions of the output regulation problem for various nonlinear system dynamics subject to unknown control directions and a known exosystem have been extensively researched for over a decade \citep*{liu2006global, ding2015adaptive}.
The output regulation problem is challenging to address satisfactorily, when the dynamics of the control system are subject to unknown control directions and an uncertain exosystem. 
For example, the robust output regulation problem over unknown control directions mixed with an uncertain exosystem for nonlinear system dynamics in lower triangular forms has been addressed in \cite{guo2017global} using  Nussbaum function-based techniques.
The output regulation problem without a known control direction has stimulated significant research interest in the control community \cite{liu2006global,oliveira2011output}. It remains a relevant and challenging research topic as outlined in \cite{liu2017cooperative,zhang2023lie,hua2023adaptive,Rovithakis2024}.
This paper proposes a non-Nussbaum function-based control solution framework for the Robust PORP subject to unknown control directions described as follows:

\emph{Given system \eqref{multirigda}, \eqref{leader} with compact subsets $\mathds{V}\in\mathds{R}^{n_v}$ and $\mathds{W}\in \mathds{R}^{n_w}$, for any constant $\nu>0$, find a non-adaptive and non-Nussbaum function-based control law such that for all initial conditions with $v(0)\in \mathds{V}$ and $w\in \mathds{W}$, $\lim\limits_{t\rightarrow\infty}|e(t)|\leq \nu$ independent of the unknown control directions with time-varying coefficients}.

The Nussbaum function-based technique, initially proposed in \cite{nussbaum1983some}, has been widely considered in multiple studies including \cite{liu2006global,guo2016cooperative} to handle the unknown control directions. 
While they have successfully solved difficult control problems, these techniques can suffer from poor transient performance, as pointed out in \cite{scheinker2012minimum}.
{\myr Nussbaum functions have often been considered as the preferred solutions for unknown control direction problems}. %Little practical 
In fact, the problems over unknown time-varying control coefficients can only be addressed using some particular Nussbaum functions as shown in \cite{liu2008global}. 
Moreover, the Nussbaum gain approach was used in \cite{bechlioulis2011robust} to investigate the robust prescribed performance control of $n$th order cascade nonlinear system with partial-state feedback.
It is important to note that Nussbaum function-based design techniques fail to achieve exponential stability even in the absence of uncertainties.
In addition, the overshoot phenomenon can be observed in almost every paper, such as \cite{liu2006global,liu2014adaptive}.
Furthermore, a counter-example was proposed in \cite{chen2019nussbaum} to show that the existing Nussbaum
functions are not always effective in multi-variable and/or time-varying control coefficients with unknown
signs.
%
%{\myr A notable improvement of Nussbaum-based techniques was proposed in \cite{berger2021funnel,hastir2023funnel} by using probing functions in the funnel control of a class of nonlinear controlled systems. }

Recent results investigated by \cite{scheinker2012minimum} have shown that the extremum-seeking algorithm can also be applied to solve the semi-global stabilization of unstable and time-varying systems with unknown time-varying control directions and full state feedback.
{\myr \cite{zhang2023lie} extended these results to the stabilization of linear uncertain systems with unknown control directions, using a bounded extremum-seeking controller to account for time-varying delays caused by delayed state measurements.}
Extremum-seeking control has a long history. 
The recent comprehensive survey  \cite{scheinker2024100} provides a complete account of the field over the last 100 years.
This technique aims to steer an unknown dynamical system to the optimum of a partially or completely unknown map \citep*{dehaan2005extremum,tan2006non,krstic2000stability,yang2022periodic}.
Particularly, \cite{guay2019extremum,guay2019extremumB} generalized an extremum-seeking control approach to solving the output regulation of a nonlinear control system using a post-processing framework.
In this study, we deal with the robust PROP of output feedback systems with an unknown control direction mixed with an uncertain exosystem.
{\myr Moreover, by employing a nonlinear internal model-based approach, our paper transforms the robust PROP into a robust non-adaptive stabilization problem for a class of nonlinear system dynamics in output feedback form with iISS inverse dynamics. 
This framework includes the full state feedback control system case described in \cite{dehaan2005extremum} as a special instance.}

By employing the extremum-seeking control approach in \cite{guay2019extremum,guay2019extremumB}, we will construct control laws that avoid the use of Nussbaum-type gain techniques and solve the robust PROP subject to unknown control directions with time-varying coefficients.
The stability of the non-adaptive output regulation design is proven via
a Lie bracket averaging technique \citep*{durr2013lie} where uniform ultimate bounded signals produced within the closed-loop system can be guaranteed.
{\myr As a result, the practical output regulation problem can be addressed by the proposed non-adaptive and extremum-seeking control approach. This further implies that the tracking error can uniformly asymptotically converge to a compact set determined by the frequency of the dither signal.
Moreover, both the output regulation and parameter estimation errors will converge to zero exponentially as time approaches infinity, provided that the frequency of the dither signal tends to infinity.
Clearly, the results enhance and differ from the results in \cite{liu2006global,guo2016cooperative}}.
Finally, a numerical example for a class of output feedback control nonlinear systems with
an unknown time-varying coefficient is provided to demonstrate the
effectiveness of the proposed non-Nussbaum-based control solution framework.

%we apply the extremum-seeking algorithm to address the global robust practical output regulation problem for a class of nonlinear systems subject to unknown control detections.

The rest of this paper is organized as follows. In Section \ref{section1}, some standard assumptions are introduced, and a non-adaptive output regulation design is given. One lemma is established, followed by the presentation of some existing results from \cite{durr2013lie,chen2015stabilization}. The main result is presented in Section \ref{section4}. A numerical example is provided in Section \ref{section5} to illustrate the proposed design.

\textbf{Notation:} $\|\cdot\|$ is the Euclidean norm. $\emph{Id} : \mathds{R}\rightarrow \mathds{R}$ is the identity function. For $X_1,\dots,X_N\in \mathds{R}^{n}$, let $\col(X_1,\dots,X_N)=[X_{1}^{\!\top},\dots ,X^{\!\top}_{N}]^{\!\top}$. For two vector fields, $\alpha_i(x)$ and $\alpha_j(x)$, the Lie bracket denoted by $[\alpha_i(x), \alpha_j(x)]$ is given by: $$\left[\alpha_i(x), \alpha_j(x)\right]=\frac{\partial \alpha_j}{\partial x}\alpha_i(x)-\frac{\partial \alpha_i}{\partial x}\alpha_j(x).$$
A function $\alpha: \mathds{R}_{\geq 0}\rightarrow \mathds{R}_{\geq 0}$ is of class $\mathcal{K}$ if it is continuous, positive definite, and strictly increasing. $\mathcal{K}_o$ and $\mathcal{K}_{\infty}$ are the subclasses of bounded and unbounded $\mathcal{K}$ functions, respectively. For functions $f_1(\cdot)$ and $f_2(\cdot)$ with compatible dimensions, their composition $f_{1}\left(f_2(\cdot)\right)$ is denoted by $f_1\circ f_2(\cdot)$. For two continuous and positive definite functions $\kappa_1(s)$ and $\kappa_2(s)$, $\kappa_1\in \mathcal{O}(\kappa_2)$ means $\limsup\limits_{s\rightarrow 0^{+}}\frac{\kappa_1(s)}{\kappa_2(s)}<\infty$.

\section{Preliminaries}\label{section1}%

\subsection{Standard Assumptions}\label{section1a}%

In the section, we list several assumptions required in the analysis of the proposed approach.
%\begin{ass}\label{ass0} .
%\end{ass}
\begin{ass}\label{ass1} All the eigenvalues of $S(\sigma)$ are distinct with zero real part, for all $\sigma\in \mathds{S}$.
 %and there exist $P_0$ satisfied
% $$P_0S(\sigma)+S^{\!\top}(\sigma)P_0-P_0P_0+I=0$$.
\end{ass}
Assumption \ref{ass1} is such that the general solution of \eqref{leader} is a sum
of finitely many sinusoidal functions with frequencies
depending on the eigenvalues of $S(\sigma)$ and amplitudes and phase
angles depending on the initial condition.
\begin{ass}\label{ass2} 
The system \eqref{multirigda} under investigation is minimum-phase, i.e., $F(w)$ is Hurwitz for all $w\in\mathds{W}$. Moreover, there are smooth nonlinear functions $\textbf{z}(v, \sigma, w)$ with $\textbf{z}(0,0,0)=0$ such that for any $v\in \mathds{R}^{n_v}$, $\sigma\in \mathds{S}$ and $w\in \mathds{W}$:
\begin{align*}%\label{opre}
  \frac{\partial \textbf{z}(v,\sigma,w)}{\partial v} S(\sigma)v&=F(w)\textbf{z}(v,\sigma,w)\notag\\
  &+G\left( q(v,w),v,w \right) q(v,w)+D_1(v,w).
\end{align*}
\end{ass}
The above assumptions ensure the following useful condition: the solution of the exosystem \eqref{leader} can be expressed as the finite sum of sinusoidal functions. {\myr Clearly}, there exists a compact set $\mathds{V}$, such that for any $v(0)\in \mathds{V}$, $v(t)\in \mathds{V}$ for all $t\geq0$.
Under Assumption \ref{ass2}, let $\textbf{y}(v,w)=q(v,w)$ and
\begin{align*}
\textbf{u}(v, \sigma, w)=&\ {\myr b(v, w)^{-1}}\left(\frac{\partial \textbf{y}(v,w)}{\partial v}S(\sigma)v-H(w)\textbf{z}\left(v, \sigma, w\right)\right.\\
& -K(\textbf{y}(v,w),v,w)\textbf{y}(v,w)-D_{2}(v,w)\bigg).
\end{align*}
%and
%\begin{align}
%\bm{\xi}(v, \sigma, w)=
%\end{align}
We can verify that $\textbf{z}(v,\sigma,w)$, $\textbf{y}(v,w)$ and $\textbf{u}(v, \sigma, w)$ are the solutions of the regulator equations associated with systems (\ref{multirigda}) and (\ref{leader}) \citep*{liu2009parameter}.

\begin{ass}\label{ass4} The functions $\textbf{u}(v,\sigma,w)$ are polynomials in $v$ with coefficients depending on $w$ and $\sigma$ for all $\textnormal{\col}(w,\sigma)\in \mathds{W}\times \mathds{S}$.
\end{ass}
\begin{rem}
Moreover, from \cite{huang2001remarks} and \cite{liu2009parameter}, under Assumptions  \ref{ass1} and \ref{ass4}, for the function $\textbf{u}(v,\sigma,w)$, there is an integer $s^*>0$ such that $\textbf{u}(v,\sigma,w)$ can be expressed by
$$\textbf{u}(v,\sigma,w)=\sum_{i=1}^{s^*}C_{i}(v(0), w,\sigma)e^{\imath \hat{\omega}_it},$$
for some functions $C_{i}(v(0), w,\sigma)$, where $\imath$ is the imaginary unit and $\hat{\omega}_i$ are distinct real numbers for $0\leq i \leq s^*$.
\end{rem}
To guarantee that the steady-state
input signal $\mathbf{u}(\mu)$ is sufficiently rich of order $n$ ($n\in\{2s^*, 2s^*-1\}$), the following assumption is needed.
\begin{ass}\label{ass3} For any $v(0)\in \mathds{V}$, $w\in \mathds{W}$ and $\sigma\in \mathds{S}$, $C_{i}(v(0), w,\sigma)\neq 0$ for all $i\in\{1,\dots,s^*\}$.
\end{ass}
 
\subsection{Nonlinear Internal Model Design}
As shown in \cite{huang2004nonlinear}, under Assumptions \ref{ass1} and \ref{ass2}, there exist positive integers $n$, such that $\textbf{u}(\mu)$ satisfy, for all $\mu\in \mathds{V}\times\mathds{W}\times \mathds{S}$ with {\myr $\mu=\col(v, \sigma, w)$},
\begin{align} \label{aode} 
\frac{d^{n}\textbf{u}(v(t), \sigma, w)}{dt^{n}}&+a_{1}(\sigma)\textbf{u}(v(t), \sigma, w)\notag\\
&+\dots+a_{n}(\sigma)\frac{d^{n-1}\textbf{u}(v(t), \sigma, w)}{dt^{n-1}}=0,
\end{align}
where $a_{1}(\sigma), a_{2}(\sigma),\dots$, and $ a_{n}(\sigma)$ are all belong to $\mathds{R}$. Under Assumptions \ref{ass1} and \ref{ass2}, equation \eqref{aode} satisfies that the following polynomial 
$$P(s)=s^{n}+a_{1}(\sigma)+a_{2}(\sigma)s+\dots+a_{n}(\sigma)s^{n-1}$$ 
contains distinct roots with zeros real parts for all $\sigma\in \mathds{S}$. 
Let $a(\sigma)=\col(a_{1}(\sigma), \dots, a_{n}(\sigma))$ 
and $${\bf{\xi}}\left(\mu \right)= \col\left(\textbf{u}\left(\mu \right),\frac{d\textbf{u}\left(\mu \right)}{dt},\dots,\frac{d^{n-1}\textbf{u}\left(\mu \right)}{dt^{n-1}}\right).$$ 
In  addition, we define
\begin{align*}
% \nonumber to remove numbering (before each equation)
  \Phi(a) =&\ \left[
                      \begin{array}{@{}c|c@{}}
                       \textbf{0}_{(n-1)\times 1} & I_{n-1} \\
                        \hline
                        -a_{1} &-a_{2},\dots,-a_{n} \\
                      \end{array}
                    \right], \;\Gamma^{\!\top} = \left[
                \begin{array}{@{}c@{}}
                  1 \\
                 \textbf{0}_{n-1}
                \end{array}
              \right]_{1\times n}.
\end{align*}
The expressions ${\bf\xi}\left(v,\sigma,w\right)$, $\Phi$ and $\Gamma$ satisfy the following so-called steady-state generator with output $u$
\begin{align}%\begin{subequations}\label{stagerator}
% \nonumber to remove numbering (before each equation)
 \frac{ \partial {\bf\xi}\left(\mu\right)}{\partial v}S(\sigma)v=&\ \Phi(a)  {\bf \xi}\left(\mu\right),\nonumber\\
   \textbf{u}\left(\mu\right)= &\ \Gamma  {\bf \xi}\left(\mu\right),\label{xi-exosystem}
\end{align}%\end{subequations}
which can be used to produce the steady-state input signal $\textbf{u}(\mu)$.
Next, we define a dynamic compensator given by:
\begin{subequations}\label{gecopen}\begin{align}
  \dot{\eta}=&\ M\eta+N\pi,\\
  \dot{\pi}=&-\pi+u,\\
    \dot{\vartheta}=&- \Theta\eta[\eta^{\!\top}\vartheta-\pi],
\end{align}\end{subequations}
where $\Theta$ is any positive constant, $\eta\in \mathds{R}^{n}$, $\vartheta\in \mathds{R}^{n}$, $\pi \in\mathds{R}$,
\begin{align*}
% \nonumber to remove numbering (before each equation)
 M=&\ \left[
                      \begin{array}{@{}c|c@{}}
                       \textbf{0}_{{(n-1)\myr}\times 1} & I_{n-1} \\
                        \hline
                        -m_{1} &-m_{2},\dots,-m_{n} \\
                      \end{array}
                    \right],\;\;N=\col(0,\dots,0,1).
\end{align*}
Following Theorem 3.1 in \cite{xu2017constructive}, we perform the following transformation
\begin{align*}%\begin{subequations}\label{steady-solution}
\theta\left(\mu\right)=& T(a){\bf \xi}\left(\mu\right),& 
\varrho =m-a,&  &
\varpi \left(\mu\right)=\varrho ^{\!\top} T(a){\bf\xi}\left(\mu\right),
\end{align*}%\end{subequations}
where $m=\col(m_1,\dots,m_n)$ and $T(a)$ is defined as
\begin{align}\label{Tmatrix}
 {\myr T(a)^{-1}}=\left[
       \begin{array}{@{}c@{}}
          \varrho ^{\!\top}\left[\Phi(\sigma)+I_n\right] \\
         \varrho ^{\!\top}\left[\Phi(\sigma)+I_n\right]\Phi(\sigma)\\
         \vdots \\
          \varrho ^{\!\top}\left[\Phi(\sigma)+I_n\right]\Phi(\sigma)^{n-1} \\
       \end{array}
     \right].
 \end{align}
 It is noted that the pair $\big(\Phi(\sigma), \left[m-a\right]^{\!\top}\big)$ is observable and all the eigenvalues of $\Phi(\sigma)$ have zeros real parts. Hence, the matrix $T(a)$ is well-defined.
As shown in \cite{xu2016output} using the Cayley–Hamilton theorem for $\Phi(a)$ and using $ {\myr T(a)^{-1}}T(a)=I$, the matrix $T(a)$ defined by \eqref{Tmatrix} satisfies a multiplicative commutative property such that
$$\Phi(a)=T(a)\Phi(a) {\myr T(a)^{-1}}.$$
Then, it can be verified that the $M$, $N$, $\Phi(a)$ and $T(a)$ satisfy the following matrix equation $$T(a)\Phi(a)-MT(a)=N\varrho^{\!\top}T(a).$$
We also have the following equations satisfying
\begin{subequations}\label{nonIM}
\begin{align}
\dot{\theta}\left(\mu\right)%=&\left(M+N\Psi^{\sigma}\right)\theta\left(v,\sigma,w\right)\\
&= T(a)\Phi(a) {\myr T(a)^{-1}}\theta\left(\mu\right)\notag\\
&=M\theta\left( \mu\right)+N\left[m-a\right]^{\!\top}\theta\left(\mu\right)\notag\\
&=M\theta\left(\mu\right)+N\varpi\left(\mu\right),\\
 \textbf{u}\left(\mu\right)&=\Gamma  {\myr T(a)^{-1}}\theta\left(\mu\right)\notag\\
& = \varrho ^{\!\top}\left[\Phi(a)+I_n\right]\theta\left(\mu\right)\notag\\
 & = \varrho^{\!\top}\left[\Phi(m-\varrho)+I_n\right]\theta\left(\mu\right) 
\textbf{ =:}\chi\left(\theta\left(\mu\right), \varrho\right),\label{nonIM2}\\
\textbf{0}&=  \theta\left(\mu\right)\left[{\myr \theta\left(\mu\right)^{\!\top}}\varrho-\varpi \left(\mu\right)\right].
\end{align}
\end{subequations}
Equation \eqref{nonIM} is called a nonlinear internal model of the system (\ref{multirigda}) (see \cite{xu2016output}). Motivated by the proposed framework in \cite{huang2004general,xu2016output}, the Robust PORP \eqref{aumet} of system \eqref{multirigda} is equivalent to a stabilization problem of a well-defined augmented system via the proposed nonlinear internal model approach. To achieve our goal, we first establish the following nonlinear functions for the signals $\eta$ and $\vartheta$ to provide the estimation of $\chi\left(\theta\left(\mu\right),\varrho\right)$ in \eqref{nonIM2}.

From Assumption \ref{ass1}, it follows that $\theta\left(\mu\right)$ and $\varpi \left(\mu\right)$ belongs to some compact set $\mathds{D}$.
 To construct the augmented system, we define a smooth function $\chi_s: \mathds{R}^{2n}\mapsto \mathds{R}$ such that
\begin{align}\label{chisatu}
\chi_s(\eta, \vartheta)=\left\{ \begin{array}{cc}
                                   \chi(\eta, \vartheta),& \textnormal{if}~(\eta,\vartheta)\in \mathds{D};\\
                                   0,& \textnormal{if}~(\eta,\vartheta)\notin \mathds{B}.\\
                                 \end{array}
\right.
\end{align}
In the above, a specific design can be 
\begin{align*}
\chi_s(\eta, \vartheta)=\chi(\eta, \vartheta)\Psi(\delta+1-\|\col(\eta, \vartheta)\|^2),
\end{align*}
with a compact support where $\Psi(s)=\frac{\psi(s)}{\psi(s)+\psi(1-s)}$, $\delta=\max_{(\eta,\vartheta)\in \mathds{D}}\|\col(\eta, \vartheta)\|^2$, $\mathds{B}=\{(\eta,\vartheta)| \|\col(\eta, \vartheta)\|^2\leq \delta+1\}$ and 
\begin{align*}
\psi(s)=\left\{ \begin{array}{cc}
                    e^{-s^{-1}}, & \textnormal{if}~ s>0;\\
                    0, & \textnormal{if}~s\leq 0.
                \end{array}
\right.
\end{align*}
To facilitate the design, we perform the following coordinate transformation,
 \begin{align*}
 % \nonumber to remove numbering (before each equation)
       \bar{z}  &=z-\textbf{z}(\mu),\quad \bar{\eta} =\eta-\theta\left(\mu\right),\quad \bar{u}= u-\chi_s(\eta, \vartheta),\\
       \bar{\vartheta} &=\vartheta-\varrho,\quad\bar{\pi}=\pi-\varpi \left(\mu\right)- {\myr b\left(\mu\right)^{-1}}e.
 \end{align*}
{\myr Applying this transformation to systems \eqref{multirigda} and the exosystem \eqref{leader}, along with the non-adaptive internal model \eqref{gecopen}, we obtain the following augmented system:}
\begin{subequations}\label{aumet}\begin{align}
% \nonumber to remove numbering (before each equation)
  \dot{\bar{z}}& = F\left(w\right)\bar{z}+\bar{G}(e,w)e, \label{aumet-eq1}\\
    \dot{\bar{\eta}}&= M  \bar{\eta} + \bar{\varepsilon}(\bar{\pi}, e),\label{aumet-eq2}\\
       \dot{\bar{\pi}} &=-\bar{\pi}-\bar{\delta}(\bar{z}, e,\mu),\label{aumet-eq3}\\
    \dot{\bar{\vartheta}} &=-\Theta\theta\left(\mu\right){\myr \theta\left(\mu\right)^{\!\top}}\bar{\vartheta}+ \bar{\gamma}(\bar{\eta}, \bar{\pi}, e, \mu),\label{aumet-eq4}\\
  \dot{e}&=\bar{g}(\bar{z}, e, \bar{\eta}, \bar{\vartheta},\mu)+b(\mu)\bar{u},\label{aumet-eq5}%\notag\\
\end{align}\end{subequations}
where $\bar{\varepsilon}(\bar{\pi}, e)=N\bar{\pi}+N {\myr b\left(\mu\right)^{-1}}e$,
\begin{align}
 \bar{G}\left(e,\mu\right)=&\ G(q(\mu)+e)(q(\mu)+e)-G(q(\mu),\mu)q(\mu),\nonumber\\
 \bar{\gamma}(\bar{\eta},\bar{\pi}, e,\mu)=&\ \Theta\bar{\eta}\left[\bar{\pi}+\varpi \left( \mu\right)+ {\myr b\left(\mu\right)^{-1}}e\right],\nonumber\\
 &+\Theta\theta\left(\mu\right)\left[\bar{\pi}+ {\myr b\left(\mu\right)^{-1}}e\right]\nonumber\\
 &-\Theta\theta\left(\mu\right)\bar{\eta}^{\!\top}\bar{\vartheta}-\bar{\eta}[\bar{\eta}+\theta\left( \mu\right)]^{\!\top}\bar{\vartheta}\nonumber\\
 &-\Theta\bar{\eta}[\bar{\eta}+\theta\left(\mu\right)]^{\!\top}\varrho-\theta\left(\mu\right)\bar{\eta}^{\!\top}\varrho,\nonumber\\
 %&\nonumber\\
% &-\theta\left(\mu\right)\left[{\myr \theta\left(\mu\right)^{\!\top}}\varrho-\varpi \left(\mu\right)\right]\nonumber\\
\bar{\delta}(\bar{z}, e,\mu)=&\ {\myr b\left(\mu\right)^{-1}}\left[e+H(w)\bar{z}+ \bar{K}\left(e,\mu\right)\right]\nonumber\\
&-\frac{\partial  {\myr b\left(\mu\right)^{-1}}}{\partial v}S(\sigma)ve,\nonumber\\
 \bar{K}\left(e,\mu\right)=&\ K(q(\mu)+e)(q(\mu)+e)-K(q(\mu),\mu)q(\mu),\nonumber\\
 \bar{\chi}(\bar{\eta}, \bar{\vartheta})=&\ \chi_s(\bar{\eta}+\theta\left(\mu\right), \bar{\vartheta}+\varrho)-\chi_s\left(\theta\left(\mu\right), \varrho\right),\nonumber\\
\bar{g}(\bar{z}, e, \bar{\eta}, \bar{\vartheta},\mu) =&\ H(w)\bar{z}+ \bar{K}\left(e,\mu\right)+b(\mu)\bar{\chi}(\bar{\eta}, \bar{\vartheta}).\nonumber
\end{align}
It is important to note that \eqref{aumet} is the augmented system which is equivalent to the original plant \eqref{multirigda} and the internal model \eqref{gecopen}. We can also show that
 \begin{align*}
 \bar{G}\left(0,\mu\right)=&\ 0,& \varepsilon(0, 0)=0,& &\bar{\delta}(0, 0,\mu)=0,\\
 \bar{K}\left(0,\mu\right)=&\ 0,& \bar{\chi}(0, 0)=0,& &\bar{\gamma}(0,0,0,\mu)=0.
 \end{align*}
As a result of this transformation, we see that the Robust PORP of \eqref{aumet} can be addressed by the stabilization of the augmented system \eqref{aumet}. The stabilization problem is solved using the following lemmas
where we introduce the following key properties of system \eqref{aumet} under Assumptions \ref{ass1}, \ref{ass2}, \ref{ass4} and \ref{ass3}. %we now list some standard assumptions on the  first equation of \eqref{aumet}.
\begin{lem}\label{lemmabodev} %
For the system \eqref{aumet} under Assumptions \ref{ass1}, \ref{ass2}, \ref{ass4} and \ref{ass3}, we have the following properties:
\begin{prop}\label{property1}%
There are smooth integral input-to-state Lyapunov functions $V_0:=V_0(\bar{z})$, $V_{1}:=V_{1}(\bar{\xi})$, and $V_2:=V_2\big(t, \bar{\vartheta}\big)$ satisfying
\begin{subequations}
\begin{align}
\underline{\alpha}_0\|\bar{z}\|^2&\leq V_0(\bar{z})\leq \bar{\alpha}_0\|\bar{z}\|^2,\notag\\
\dot{V}_0\big|_{\eqref{aumet-eq1}} &\leq  -\alpha_0 V_0+\delta_0(e^2),\label{V0}\\
\underline{\alpha}_1\|\bar{\xi}\|^2&\leq V_1(\bar{\xi})\leq \bar{\alpha}_1\|\bar{\xi}\|^2,\notag\\
\dot{V}_1\big|_{\eqref{aumet-eq2}+\eqref{aumet-eq3}} &\leq  -\alpha_1 V_1+\beta_1V_0+\delta_1(e^2),\label{V1}\\
\underline{\alpha}_2(\|\bar{\vartheta}\|^2)&\leq V_2(\bar{\vartheta})\leq \bar{\alpha}_2(\|\bar{\vartheta}\|^2),\notag\\
\dot{V}_2\big|_{\eqref{aumet-eq4}} &\leq  -\alpha_2(V_2) +\delta_3(V_1)+\delta_2(e^2),\label{V2}
\end{align}
\end{subequations}
for positive constants $\underline{\alpha}_0$, $\bar{\alpha}_0$, $\underline{\alpha}_1$, $\bar{\alpha}_1$, $\alpha_0$, $\alpha_1$, and $\beta_1$, and comparison functions $\underline{\alpha}_2(\cdot)\in \mathcal{K}_{\infty}$, $\bar{\alpha}_2(\cdot)\in \mathcal{K}_{\infty}$, $\alpha_2(\cdot)\in \mathcal{K}_{o}$, $\delta_0(\cdot)\in \mathcal{K}$, $\delta_1(\cdot)\in \mathcal{K}$, $\delta_2(\cdot)\in \mathcal{K}_{\infty}$ and $\delta_3(\cdot)\in \mathcal{K}_{\infty}$ with $\bar{\xi}=\textnormal{\col}(\bar{\eta},\bar{\pi})$.
\end{prop}
\begin{prop}\label{property2} There are positive constants $\phi_0$, $\phi_1$, $\phi_2$ and a smooth function $\delta_5(\cdot)\in \mathcal{K}$ such that
$$\|\bar{g}(\bar{z}, e, \bar{\eta}, \bar{\vartheta},\mu)\|^2\leq \phi_0 V_0 +\phi_1V_1+\phi_2\alpha_2(V_2)+\delta_5(e^2).$$
\end{prop}
\end{lem}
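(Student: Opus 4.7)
The plan is to establish Property 1 by constructing each Lyapunov function subsystem-by-subsystem, exploiting the Hurwitz structure in the $\bar{z}$ and $\bar{\eta}$ dynamics and the saturated structure of $\chi_s$ to handle $\bar{\vartheta}$, and then to derive Property 2 from the explicit form of $\bar{g}$ by decomposing it into three pieces and bounding each with Young's inequality and the Lipschitz/boundedness properties of $\chi_s$.

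For $V_0$, since $F(w)$ is Hurwitz uniformly on the compact set $\mathds{W}$ by Assumption~\ref{ass2}, I would pick the unique $P_0(w)>0$ solving $F^T(w)P_0(w)+P_0(w)F(w)=-I$ and set $V_0(\bar z)=\bar z^TP_0(w)\bar z$. The quadratic bounds follow from continuity of $P_0$ on $\mathds{W}$. Differentiating along \eqref{aumet-eq1} gives $\dot V_0=-\|\bar z\|^2+2\bar z^T P_0\bar G(e,\mu)e$; since $\bar G(0,\mu)=0$ and $\bar G$ is smooth, writing $\bar G(e,\mu)e$ as $e$ times a smooth factor and applying Young's inequality absorbs the cross term into $\tfrac12\|\bar z\|^2+\delta_0(e^2)$ with $\delta_0\in\mathcal K$. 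For $V_1$, since $m$ is a design parameter I would choose it so that $M$ is Hurwitz, pick $P_1>0$ with $M^TP_1+P_1M=-I$, and set $V_1=\bar\eta^TP_1\bar\eta+\lambda\bar\pi^2$ with $\lambda$ large. Differentiating along \eqref{aumet-eq2}--\eqref{aumet-eq3} the coupling through $N\bar\pi$ is absorbed by the $-\bar\pi^2$ term from the $\dot{\bar\pi}$ equation, the perturbation $Nb^{-1}e$ contributes a $\delta_1(e^2)$ term, and $\bar\delta(\bar z,e,\mu)$, which vanishes at $(\bar z,e)=0$, splits by Young's inequality into a $\beta_1V_0$ contribution plus a $\mathcal K$ function of $e^2$. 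Both $V_0$ and $V_1$ are standard quadratic ISS Lyapunov functions.

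The genuine difficulty is $V_2$. The dissipation matrix $\Theta\theta(\mu)\theta^T(\mu)$ is rank one and time-varying, so no strict pointwise Lyapunov bound on $\|\bar\vartheta\|^2$ is available; the decay must come from persistence of excitation, which is exactly what Assumptions~\ref{ass1} and~\ref{ass3} (together with the almost-periodic structure given by the remark following Assumption~\ref{ass4}) deliver. The approach I would take is to form a time-dependent quadratic Lyapunov function of the form $V_2(t,\bar\vartheta)=\bar\vartheta^T P_\vartheta(t)\bar\vartheta$ where $P_\vartheta(t)$ is built from the finite integral $\int_t^{t+T}\Omega(s,t)^T\theta(s)\theta(s)^T\Omega(s,t)\,ds$ of the observability Gramian along the transition matrix $\Omega$ of the nominal $\bar\vartheta$ flow, the PE assumption guaranteeing uniform positive definiteness of this Gramian over a period. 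Passing through $V_2=\ln(1+\bar\vartheta^TP_\vartheta(t)\bar\vartheta)$ then yields the iISS-type bound $\dot V_2\le-\alpha_2(V_2)+(\cdot)$ with $\alpha_2\in\mathcal K_o$ saturating as $\|\bar\vartheta\|\to\infty$. The perturbation $\bar\gamma$ mixes $\bar\eta,\bar\pi,e$ and $\bar\vartheta$ itself; products involving $\bar\vartheta$ are handled by the logarithmic transformation (which provides the $1/(1+\|\bar\vartheta\|^2)$ damping), while the remaining terms are split by Young's inequality into a $\delta_3(V_1)$ piece (class $\mathcal K_\infty$ in $V_1$ since $\bar\gamma$ is at least quadratic in $\bar\eta$) and a $\delta_2(e^2)$ piece.

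For Property~\ref{property2}, I would expand $\bar g=H(w)\bar z+\bar K(e,\mu)+b(\mu)\bar\chi(\bar\eta,\bar\vartheta)$ and use $(a+b+c)^2\le 3(a^2+b^2+c^2)$. The first term is bounded by $\|H\|_\infty^2\|\bar z\|^2\le(\|H\|_\infty^2/\underline\alpha_0)V_0=\phi_0V_0$. The second, using $\bar K(0,\mu)=0$ and smoothness of $K$, is bounded by a $\mathcal K$ function of $e^2$ contributing to $\delta_5$. The third is the interesting one: by construction $\chi_s$ is globally bounded and globally Lipschitz (its derivative is compactly supported), so $\bar\chi$ admits both a linear bound $\|\bar\chi\|^2\le L(\|\bar\eta\|^2+\|\bar\vartheta\|^2)$ and a uniform bound $\|\bar\chi\|^2\le K$. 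Combining the two via $\|\bar\chi\|^2\le L\|\bar\eta\|^2+\min(L\|\bar\vartheta\|^2,K)$ and observing that the saturation $\min(L\|\bar\vartheta\|^2,K)$ is dominated (up to a multiplicative constant) by $\alpha_2(V_2)$ (both are $\mathcal{O}(\|\bar\vartheta\|^2)$ near zero and both saturate at infinity) yields $\|b\bar\chi\|^2\le\phi_1V_1+\phi_2\alpha_2(V_2)$, completing the bound. The main obstacle throughout is the $V_2$ construction; everything else reduces to standard quadratic Lyapunov manipulations and Young's inequality.
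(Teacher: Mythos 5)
Your proposal is correct and follows essentially the same route as the paper: quadratic Lyapunov functions $\bar z^TP(w)\bar z$ and $h_0\bar\eta^TP\bar\eta+\bar\pi^2$ for the first two subsystems, persistence of excitation of $\theta(\mu)$ for the $\bar\vartheta$-subsystem followed by the logarithmic transformation $\ln(1+W)$ to produce the $\mathcal{K}_o$ gain $\alpha_2$, and for Property~2 the decomposition of $\bar g$ together with the global boundedness and Lipschitz continuity of the cut-off function $\chi_s$ to dominate the $\bar\vartheta$-dependent piece by $\phi_2\alpha_2(V_2)$. The only cosmetic difference is that you build the converse Lyapunov function for the PE subsystem explicitly from an observability Gramian, whereas the paper obtains the same object by citing Anderson's exponential-stability result and Khalil's converse Lyapunov theorem.
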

\begin{proof} We first verify Property \ref{property1}. Consider the $\bar{z}$-subsystem \eqref{aumet-eq1}, as $F(w)$ is Hurwitz, we define the following Lyapunov function
$$V_0(\bar{z})=\bar{z}^{\!\top}P(w)\bar{z},$$
where the positive definite matrix $P(w)$ satisfies $$P(w)F(w)+F(w)^{\!\top}P(w)=-I.$$ Let $\lambda_w$ and $\lambda_W$ be the minimum and maximum eigenvalues of $P(w)$. Taking the time derivative of $V_0(\bar{z})$ along the trajectory of the $\bar{z}$-subsystem \eqref{aumet-eq1} gives
\begin{align*}
\dot{V}_0\big|_{\eqref{aumet-eq1}}%&= 2\bar{z}^{\!\top}P(w)\left[F\left(w\right)\bar{z}+\bar{G}(e,w)e\right]\\
%&= 2\bar{z}^{\!\top}P(w)F\left(w\right)\bar{z}+2\bar{z}^{\!\top}P(w)\bar{G}(e,w)e\\
%&\leq -\bar{z}^{\!\top}\bar{z}+2\|\bar{z}\|\|P(w)\bar{G}(e,w)e\|\\
%&\leq-\bar{z}^{\!\top}\bar{z}+\frac{1}{4}\|\bar{z}\|^2+4\|P(w)\bar{G}(e,w)e\|^2\\
&\leq-\alpha_0V_0+\delta_0(e^2)
\end{align*}
where $\alpha_0=\frac{3}{4\lambda_W}$ and $\delta_0(e^2)=4\|P(w)\bar{G}(e,w)e\|^2$. Thus, equation \eqref{V0} has been shown.

Equation \eqref{V1} can be done similarly as the above. Consider $\bar{\xi}$-subsystems \eqref{aumet-eq2} and \eqref{aumet-eq3}. Since $M$ is Hurwitz, define the following Lyapunov function
$$V_1(\bar{\xi})=h_0\bar{\eta}^{\!\top}{P}_M\bar{\eta}+\bar{\pi}^2$$
where $h_0$ is some positive parameter, to be determined, and the positive definite matrix ${P}_M$ satisfies $${P}_M{M}+{M}^{\!\top}{P}_M=-I.$$ Let $\lambda_p$ and $\lambda_P$ be the minimum and maximum eigenvalues of $\diag(h_0P_M, 1)$. Taking the time derivative of $V_1(\bar{\xi})$ along the trajectory of $\bar{\xi}$-subsystems \eqref{aumet-eq2} and \eqref{aumet-eq3} gives
\begin{align*}
\dot{V}_1\big|_{\eqref{aumet-eq2}+\eqref{aumet-eq3}} %=&2h_0\bar{\eta}^{\!\top}{P}{M}\bar{\eta}+2h_0\bar{\eta}^{\!\top}{P}\bar{\varepsilon}(\bar{\pi}, e)\\
 %&-2\bar{\pi}^2+2\bar{\pi}\bar{\delta}(\bar{z}, e,\mu) \\
%\leq&-h_0\|\bar{\eta}\|^2+2h_0\|\bar{\eta}\|\|{P}N\|(\|\bar{\pi}\|+\| {\myr b\left(\mu\right)^{-1}}e\|)\\
% &-2\bar{\pi}^2+2\|\bar{\pi}\|\left[\| {\myr b\left(\mu\right)^{-1}}e+ \bar{K}\left(e,\mu\right)\|\right.\\
 %&\left.+\|H(w)\|\|\bar{z} \|\right]\\
 %\leq&-0.5 h_0\|\bar{\eta}\|^2+4h_0\|{P}N\|^2\|\bar{\pi}\|^2\\
% &-1.5\bar{\pi}^2+4\|H(w)\|^2\|\bar{z} \|^2+\delta_1(e^2)\\
 % \leq&-0.5 h_0\|\bar{\eta}\|^2-\bar{\pi}^2+4\|H(w)\|^2\|\bar{z} \|^2+\delta_1(e^2)\\
   \leq&-\alpha_1 V_1+\beta_1V_0+\delta_1(e^2)
   \end{align*}
where 
\begin{align*}
\alpha_1=&\ \frac{\min\{0.5 h_0,1\}}{\lambda_P},\ \beta_1=\frac{4\|H(w)\|^2}{\lambda_p},\ h_0=\frac{1}{2\|{P}_M N\|^2},\\
\delta_1(e^2)=&\ 4h_0\|{P}_M N\|^2\| {\myr b\left(\mu\right)^{-1}}e\|^2+4\| {\myr b\left(\mu\right)^{-1}}e+\bar{K}\left(e,\mu\right)\|^2.  
\end{align*}
Next, we consider the $\bar{\vartheta}$-subsystem \eqref{aumet-eq4}. 
Assumption \ref{ass3} guarantees that the steady-state
input signal $\mathbf{u}(\mu)$ is sufficiently rich of order $n$ ($n\in\{2s^*, 2s^*-1\}$). 
From Theorem 4.1 in \cite{liu2009parameter}, we have that $\bf{\xi}\left(\mu\right)$ in \eqref{xi-exosystem} is persistently exciting (see \cite{krstic1996invariant}). 
It is noted that $$\theta\left(\mu\right)= T(a){\bf \xi}\left(\mu\right)$$ where the matrix $T(a)$ defined in \eqref{Tmatrix} is nonsingular.
It follows from Lemma 3 in \cite{narendra1987persistent} that the vector $\theta\left(\mu\right)$ is persistently exciting. From Theorem 1 in \cite{anderson1977exponential}, the $\bar{\vartheta}$-subsystem \eqref{aumet-eq4} is exponentially stable, provided that $\bar{\gamma}(\bar{\eta}, e, \mu)=0$. Moreover, by  Theorem 4.14 in \cite{khalil2002nonlinear}, there exist a Lyapunov function $W(t,\bar{\vartheta})$ satisfying
\begin{align*}
\underline{c}_1\|\bar{\vartheta}\|^2\leq\ W(t,\bar{\vartheta})\leq&\,\, \bar{c}_1\|\bar{\vartheta}\|^2,\\
\frac{\partial W}{\partial \bar{\vartheta}}\leq&\,\, \bar{c}_2\|\bar{\vartheta}\|,\\
\frac{\partial W}{\partial t}- \frac{\partial W}{\partial \bar{\vartheta}}\left[\theta\left(\mu\right){\myr \theta\left(\mu\right)^{\!\top}}\bar{\vartheta}\right]\leq& -\bar{c}_3\|\bar{\vartheta}\|^2,
\end{align*}
where $\underline{c}_1$, $\bar{c}_1$, $\bar{c}_2$, and $\bar{c}_3$ are some positive constants. Define $V_2(\bar{\vartheta},t)=\ln\left(W(t,\bar{\vartheta})+1\right)$, which satisfies $$V_2(\bar{\vartheta},t)\leq W(t,\bar{\vartheta}). $$ Then, we have the %following inequality $\underline{\alpha}_2(s)<\ln\left(s+1\right)<\bar{\alpha}_2(s)$, where
$\underline{\alpha}_2(s)= \ln\left(\underline{c}_1 s^2+1\right)$ and $\bar{\alpha}_2(s)=\bar{c}_1s^2$ for all $s>0$.
Taking the time derivative of $V_2(\bar{\vartheta},t)$ along the trajectory of $\bar{\vartheta}$-subsystems \eqref{aumet-eq4} gives
\begin{align*}
\dot{V}_2\big|_{\eqref{aumet-eq4}} %&=\frac{\frac{\partial W}{\partial t}- \frac{\partial W}{\partial \bar{\vartheta}}\left[\theta\left(\mu\right){\myr \theta\left(\mu\right)^{\!\top}}\bar{\vartheta}\right]}{W(t,\bar{\vartheta})+1}+\frac{\frac{\partial W}{\partial \bar{\vartheta}}\bar{\gamma}(\bar{\eta}, \bar{\pi}, e, \mu)}{W(t,\bar{\vartheta})+1}\\
&\leq -\frac{\bar{c}_3\|\bar{\vartheta}\|^2}{W(t,\bar{\vartheta})+1}+\frac{\bar{c}_2\|\bar{\vartheta}\|\|\bar{\gamma}(\bar{\eta}, \bar{\pi}, e, \mu)\|}{W(t,\bar{\vartheta})+1}.
\end{align*}
Under Assumption \ref{ass1}, $\theta\left(\mu\right)$, $\varrho$, $ {\myr b\left(\mu\right)^{-1}}$ and $\varpi \left(\mu\right)$ are bounded. Then, it can be verified that for some positive constant $\bar{c}_4$,
\begin{align*}
\|\bar{\gamma}(\bar{\eta},\bar{\pi}, e,\mu)\|\leq&
\bar{c}_4(\|\bar{\xi}\|^2+\|\bar{\xi}\|)(1+\|\bar{\vartheta}\|)+\bar{c}_4(\|e\|+ \|e\|^2).\nonumber%\\
% &
 \end{align*}
Then, we have
\begin{align*}
\dot{V}_2\big|_{\eqref{aumet-eq4}} %\leq & \frac{-\bar{c}_3\|\bar{\vartheta}\|^2}{W(t,\bar{\vartheta})+1}+\frac{\bar{c}_2\bar{c}_4\|\bar{\vartheta}\|(\|e\|+ \|e\|^2)}{W(t,\bar{\vartheta})+1}\nonumber\\
%&+\frac{\bar{c}_2\bar{c}_4\|\bar{\vartheta}\|(\|\bar{\xi}\|^2+\|\bar{\xi}\|)(1+\|\bar{\vartheta}\|)}{W(t,\bar{\vartheta})+1}\nonumber\\
\leq & -\frac{\bar{c}_3\|\bar{\vartheta}\|^2}{2(\bar{c}_1\|\bar{\vartheta}\|^2+1)}+\frac{\bar{c}_2^2\bar{c}_4^2(\|e\|+ \|e\|^2)^2}{\bar{c}_3(\underline{c}_1\|\bar{\vartheta}\|^2+1)}\nonumber\\
&+\frac{\bar{c}_2^2\bar{c}_4^2(\|\bar{\xi}\|^2+\|\bar{\xi}\|)^2(1+\|\bar{\vartheta}\|)^2}{\bar{c}_3(\underline{c}_1\|\bar{\vartheta}\|^2+1)}.
\end{align*}
For any constant $s\geq 0$, $\underline{c}_1s^2+1\geq 1$ and $ \frac{\bar{c}_2^2\bar{c}_4^2(1+s)^2}{\bar{c}_3(\underline{c}_1s^2+1)}$ is bounded for some positive constant $\bar{c}_5$. Then, from the inequality $(a+b)^2\leq 2(a^2+b^2)$, we get
\begin{align}\label{V2alpha}
\dot{V}_2\big|_{\eqref{aumet-eq4}} %\leq & \frac{-\bar{c}_3\|\bar{\vartheta}\|^2}{2(\bar{c}_1\|\bar{\vartheta}\|^2+1)}+\frac{\bar{c}_2^2\bar{c}_4^2}{\bar{c}_3}(\|e\|+ \|e\|^2)^2\nonumber\\
%&+\bar{c}_5(\|\bar{\xi}\|^2+\|\bar{\xi}\|)^2\nonumber\\
\leq & -\frac{\bar{c}_3\|\bar{\vartheta}\|^2}{2(\bar{c}_1\|\bar{\vartheta}\|^2+1)}+2\frac{\bar{c}_2^2\bar{c}_4^2}{\bar{c}_3}(\|e\|^2+ \|e\|^4)\nonumber\\
&+2\bar{c}_5(\|\bar{\xi}\|^4+\|\bar{\xi}\|^2)\nonumber\\
\leq & -\alpha_2(V_2)+\delta_3(V_1)V_1+\delta_2(e^2)
\end{align}
where 
\begin{align*}
\alpha_2(s)=&\ \frac{\bar{c}_3 \underline{c}_1(e^s-1)}{2(\bar{c}_1^2(e^s-1)+\bar{c}_1\underline{c}_1)}, &\delta_3(s)=2\bar{c}_5(\underline{\alpha}_1^{-2}s^2+\underline{\alpha}_1^{-1}s),\\
\delta_2(e^2)=&\ 2\frac{\bar{c}_2^2\bar{c}_4^2}{\bar{c}_3}(\|e\|^2+ \|e\|^4). &
\end{align*}
We now verify Property \ref{property2}. Consider the function $\bar{g}(\bar{z}, e, \bar{\eta}, \bar{\vartheta},\mu)$ in \eqref{aumet}, using the inequality $(a+b)^2\leq 2a^2+2b^2$, we obtain%From equation \ref{nonIM2}, we have
\begin{align*}
\|\bar{g}(\bar{z}, e, \bar{\eta}, \bar{\vartheta},\mu)\|^2 \leq &\ 2\|H(w)\|^2\|\bar{z}\|^2+ 2\|\bar{K}\left(e,\mu\right)\|^2\\
&+2{\myr b(\mu)^2}\|\chi_s({\eta}, {\vartheta})-\chi_s\left(\theta\left(\mu\right), \varrho\right)\|^2.
\end{align*}
It can be verified that the function $\bar{g}(\bar{z}, e, \bar{\eta}, \bar{\vartheta},\mu)$ is smooth and vanishes at $\col(\bar{z}, e, \bar{\eta}, \bar{\vartheta})=\col(0,0,0,0)$. Let $$\zeta=\col(\eta, \vartheta),\quad \zeta_0=\col(\theta\left(\mu\right), \varrho)\quad \textnormal{and}\quad \bar{\zeta}=\zeta-\zeta_0.$$ From equation \eqref{chisatu}, the function $\chi_s(\zeta)$ is bounded for all $\zeta\in \mathds{R}^{2n}$, by the \cite[Lemma A.1]{xu2016output} and \cite[Remark A.1]{xu2016output}, there exist $\gamma_1, \gamma_2\in {\myr \mathcal{K}_o}\cap \mathcal{O}(\emph{Id})$ such that
\begin{align*}
2{\myr b(\mu)^2}|\chi_s(\zeta_0+\bar{\zeta})-\chi_s\left(\zeta_0\right)|^2\leq \gamma_1(\|\bar{\eta}\|^2)+\gamma_2(\|\bar{\vartheta}\|^2),
\end{align*}
for $\forall\zeta\in \mathds{R}^{2n}$ and $\mu\in \mathds{V}\times \mathds{W}\times \mathds{S}$.
From equation \eqref{V2alpha}, it also can be verified that
\begin{align*}\limsup_{s\rightarrow 0^{+}}\frac{\gamma_2\circ [(e^s-1)\underline{c}_1^{-1}]}{\alpha_2(s)}=&\limsup_{s\rightarrow 0^{+}}\frac{\gamma_2\circ [(e^s-1)\underline{c}_1^{-1}]}{(e^s-1)\underline{c}_1^{-1}}\\
&\times\limsup_{s\rightarrow 0^{+}}\frac{(e^s-1)\underline{c}_1^{-1}}{\alpha_2(s)}< +\infty.\end{align*}
Hence, there exists a positive constant $\phi_2$ such that
\begin{align*}
2\|b(\mu)\|^2|\chi_s(\zeta_0+\bar{\zeta})-\chi_s\left(\zeta_0\right)|^2\leq \gamma_1(\|\bar{\eta}\|^2)+\phi_2\alpha_2(V_2),
\end{align*}
for $\forall\zeta\in \mathds{R}^{2n}$ and $\mu\in \mathds{V}\times \mathds{W}\times \mathds{S}$.
From equation \eqref{V1}, we have
\begin{align*}\limsup_{s\rightarrow 0^{+}}\frac{\gamma_1\circ [s\underline{\alpha}_1^{-1}]}{s}=&\limsup_{s\rightarrow 0^{+}}\frac{\gamma_1\circ[ s\underline{\alpha}_1^{-1}]}{s\underline{\alpha}_1^{-1}}\underline{\alpha}_1< +\infty.\end{align*}
Hence, from \cite[Lemma A.3]{xu2016output}, there exists a positive constant $\phi_1$ such that
\begin{align*}
2{\myr b(\mu)^2}|\chi_s(\zeta_0+\bar{\zeta})-\chi_s\left(\zeta_0\right)|^2\leq \phi_1 V_1+\phi_2\alpha_2(V_2),
\end{align*}
for $\forall\zeta\in \mathds{R}^{2n}$ and $\mu\in \mathds{V}\times \mathds{W}\times \mathds{S}$.
The function $2\|\bar{K}\left(e,\mu\right)\|^2$  is a continuously differentiable function satisfying $2\|\bar{K}\left(0,\mu\right)\|^2=0$ for any $\mu\in \mathds{V}\times \mathds{W}\times \mathds{S}$. By (iv) of \cite[Lemma 11.1]{chen2015stabilization}, there exists a smooth function $\delta_6(e^2)$ such that
$$2\|\bar{K}\left(e,\mu\right)\|^2\leq \delta_6(e^2)e^2\equiv \delta_5(e^2).$$
Then, from equation \eqref{V0}, we will have
\begin{align*}
\|\bar{g}(\bar{z}, e, \bar{\eta}, \bar{\vartheta},\mu)\|^2 \leq &\ \phi_0 V_0+\phi_1V_1+\phi_2\alpha_2(V_2)+\delta_5(e^2),
\end{align*}
where $\phi_0=2\|H(w)\|^2\underline{\alpha}_0^{-1}$.
\end{proof}
\begin{rem}
$\alpha_2(\cdot)\in \mathcal{K}_{o}$ in Property \ref{property1} of Lemma \ref{lemmabodev} means that the $\bar{\vartheta}$-subsystem of \eqref{aumet} is iISS but not Input-to-State Stable (ISS) (which would require a stronger gain condition for $\alpha_{2}(\cdot)\in\mathcal{K}_{\infty}$ in \eqref{V2}). Property \ref{property1} also establishes
the growth conditions for the nonlinearity in \eqref{aumet}.
\end{rem}

\subsection{Lie Bracket Approximations}
Before we present our main results, we first review some content related to the Lie bracket averaging approach. Let us introduce a control system in the following nonlinear affine form:
\begin{align}\label{general}
\dot{x} = f(x) +\sum\nolimits_{i=1}^{m} g_{i}(x)\sqrt{\omega}u_i(\omega t)
\end{align}
where $x\in \mathds{R}^n$, $x(0)\in \mathds{R}^{n}$, $\omega>0$, $t\in [0,\infty)$, $f(x)$ and $g_i(x)$ are twice continuously differentiable. For $i=1,\dots,m$, the input function $u_i(\omega t)$ are assumed to be uniformly bounded and periodic with period $T$ such that $\int_{0}^{T}u_i(\omega \tau)d\tau=0$.
\begin{rem} Please note that the dynamical system \eqref{general} provides a generic representation of the class of systems to which belongs the closed-loop system presented in the next section with $x\equiv X=\textnormal{\col}(\bar{z}, \bar{\eta}, \bar{\pi}, \bar{\vartheta},  e)$ and $f(x)\equiv P(X,\mu)$ (defined at the beginning of the next section).\end{rem}

Following the works \citep*{gurvits1992averaging,durr2013lie}, the Lie bracket average of nonlinear system \eqref{general} can be calculated in the following form:
\begin{align}\label{avgeneral}
\dot{\tilde{x}}=&\ f(\tilde{x})\nonumber\\
&+\frac{1}{T}\sum\limits_{i< j}[g_i,g_j](\tilde{x})\int_{0}^{T}\int_{0}^{\theta}u_j(\omega \theta)u_i(\omega \tau)d\tau d\theta.
\end{align}
We now define the nonlinear parameterized dynamical system:
\begin{align}\label{nonpara}
\dot{x}^{\epsilon}=F^{\epsilon}(t,x^{\epsilon})
\end{align}
with a small positive parameter $\epsilon$. The solution of \eqref{nonpara} is denoted by $x^{\epsilon}(t)=\phi_{\epsilon}(t,t_0,x_0)$, where $\phi_{\epsilon}$ is the flow of the system for $t>0$ with initial conditions $t_0$, $x^{\epsilon}(t_0)=x^{\epsilon}_0$. The averaged dynamics are defined as follows:
\begin{align}\label{avnonpara}
\dot{x} =F (t,x)
\end{align}
whose solution of \eqref{avnonpara} is denoted by $x (t)=\phi (t,t_0,x_0)$, where $\phi$ is the flow of the system for $t>0$ with initial conditions $t_0$, $x(t_0)=x_0$. The  convergence property is defined as follows:
\begin{defi}\citep*{moreau2000practical} The systems \eqref{nonpara} and \eqref{avnonpara} are said to satisfy the convergence property if for every $T\in (0,\infty)$ and compact set $\mathds{K}\in \mathds{R}^{n}$ satisfying
$\left\{(t,t_0,x_0)\in \mathds{R}\times \mathds{R}\times \mathds{R}^n: t\in [t_0,t_0+T],x_0\in \mathds{K}\right\}\subset \textnormal{Dom}\;\phi$, for every $\delta\in (0, \infty)$ there exists $\epsilon^*$ such that for all $t_0\in \mathds{R}$, for all $x_0\in \mathds{K}$ and for all $\epsilon\in (0, \epsilon^*)$, $$\|\phi^{\epsilon}(t,t_0,x_0)-\phi (t,t_0,x_0)\|<\delta,~~~~\forall t\in [t_0,t_0+T].$$
\end{defi}
Then, we recall the $\epsilon$-Semi-global practical uniform asymptotic stability ($\epsilon$-SPUAS).
\begin{defi}[$\epsilon$-SPUAS] An equilibrium point of \eqref{nonpara} is said to be $\epsilon$-SPUAS if it satisfies uniform stability, uniform boundedness and global uniform attractivity.
\end{defi}
Then, systems \eqref{general} and \eqref{avgeneral} satisfy the following lemma.
\begin{lem}\label{morelemm}\citep*{moreau2000practical} Assume that systems \eqref{nonpara} and \eqref{avnonpara} satisfy the converging trajectories propriety. If the origin of system \eqref{avnonpara} is a global uniform asymptotically stable equilibrium point, then the origin of system \eqref{nonpara} is $\epsilon$-SPUAS.
\end{lem}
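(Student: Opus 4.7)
The plan is to combine the two hypotheses — the converging trajectories property for the pair \eqref{nonpara}--\eqref{avnonpara} and the global uniform asymptotic stability (GUAS) of the averaged system \eqref{avnonpara} — to establish each of the three ingredients of $\epsilon$-SPUAS for the perturbed system \eqref{nonpara}. The averaged flow $\phi(t,t_0,x_0)$ plays the role of a reference trajectory, for which GUAS yields a $\mathcal{KL}$-type estimate $\|\phi(t,t_0,x_0)\|\le \beta(\|x_0\|,t-t_0)$, while the converging trajectories property guarantees that $\phi^{\epsilon}(t,t_0,x_0)$ remains within arbitrarily small tolerance of $\phi(t,t_0,x_0)$ on any compact window $[t_0,t_0+T]$, provided $\epsilon$ is chosen small enough in terms of $T$ and the compact set $\mathbb{K}$ containing the initial condition.

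First I would prove uniform stability. Given $r>0$, GUAS of \eqref{avnonpara} supplies a radius $\rho(r)>0$ such that $\|\phi(t,t_0,x_0)\|\le r/2$ for every $t\ge t_0$ and every $x_0$ in the ball $B_{\rho(r)}$, together with a finite window length $T_1(r)$ after which averaged trajectories from $B_{\rho(r)}$ are confined to $B_{r/4}$. Applying the converging trajectories property on $[t_0,t_0+T_1(r)]$ with tolerance $r/2$ and the compact set $\overline{B_{\rho(r)}}$ provides an $\epsilon^*_1$ with $\|\phi^{\epsilon}(t,t_0,x_0)-\phi(t,t_0,x_0)\|<r/2$ on that window, so that $\|\phi^{\epsilon}\|\le r$ there. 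Because the averaged state has contracted into $B_{r/4}$ by the end of the window, one can reset the initial time and iterate with the same $\epsilon^*_1$, yielding uniform stability on $[t_0,\infty)$.

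Uniform boundedness and global uniform attractivity follow from an identical window-by-window bootstrap applied to an arbitrary compact set $\mathbb{K}$ of initial conditions. GUAS produces a uniform bound $B(\mathbb{K})$ for the averaged flow on $[t_0,\infty)\times \mathbb{K}$ and, for any target neighborhood $\mathcal{N}$ of the origin, a finite time $T_2(\mathbb{K},\mathcal{N})$ after which every averaged trajectory issued from $\mathbb{K}$ lies in the half-neighborhood $\tfrac12\mathcal{N}$. Converging trajectories on $[t_0,t_0+T_2]$ with tolerance determined by $\mathcal{N}$ places $\phi^{\epsilon}$ inside $\mathcal{N}$ at time $t_0+T_2$, and the already proven uniform stability clause keeps it there afterwards. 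Combining the three properties yields $\epsilon$-SPUAS for a single $\epsilon^*=\min\{\epsilon^*_1,\epsilon^*_2,\dots\}$.

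The main obstacle will be precisely this interplay between the finite-horizon nature of converging trajectories and the infinite-horizon conclusions required by $\epsilon$-SPUAS. The converging trajectories estimate is quantitative only on $[t_0,t_0+T]$, so the horizon $T$ must be chosen \emph{first} from the GUAS $\mathcal{KL}$ bound, and then $\epsilon^*$ extracted from converging trajectories for that $T$; one must verify that after resetting $t_0$ at the end of each window the shifted initial condition still lies in a fixed compact set so that the \emph{same} $\epsilon^*$ works on every subsequent window. This is the delicate step, and it is where the time-uniformity built into the definitions of the converging trajectories property and GUAS — rather than merely asymptotic averaging — is essential.
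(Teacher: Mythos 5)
The paper does not actually prove this lemma: it is imported verbatim from the cited reference \cite{moreau2000practical}, so there is no in-paper argument to compare against. Your reconstruction follows the standard proof of that result --- choose the window length $T$ first from the $\mathcal{KL}$ estimate furnished by global uniform asymptotic stability of \eqref{avnonpara}, extract $\epsilon^*$ from the converging-trajectories property for that $T$ and a fixed compact set, and bootstrap window by window --- and you correctly isolate the only delicate point, namely that the restarted initial condition must land back in the \emph{same} compact set so that a single $\epsilon^*$ serves on every window.

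However, the specific constants you wrote down do not close that loop. You take the tolerance in the converging-trajectories step to be $r/2$ and let the averaged trajectory contract into $B_{r/4}$ by time $t_0+T_1$; this places $\phi^{\epsilon}(t_0+T_1)$ only in $B_{3r/4}$, which need not be contained in $B_{\rho(r)}$ (typically $\rho(r)\ll r$), so the iteration cannot restart from the compact set $\overline{B_{\rho(r)}}$ for which $\epsilon^*_1$ was obtained. The fix is standard: take the tolerance to be $\min\{r/2,\rho(r)/2\}$ and choose $T_1$ so that averaged trajectories issued from $B_{\rho(r)}$ have contracted into $B_{\rho(r)/2}$; then $\phi^{\epsilon}(t_0+T_1)\in B_{\rho(r)}$ and the same $\epsilon^*_1$ applies on the next window. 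An analogous adjustment is needed in the attractivity step, where the target set for the averaged flow must be shrunk relative to both $\mathcal{N}$ and the stability radius associated with $\mathcal{N}$. With those corrections your argument is essentially the proof given in \cite{moreau2000practical}.
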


%%------------------------------------------
\begin{figure*}[htp]
    \centering
\usetikzlibrary{calc,shadows}
\tikzset{ashadow/.style={opacity=.25, shadow xshift=0.07, shadow yshift=-0.07},}
\tikzstyle{block} = [draw, rectangle, minimum height=2em, minimum width=6em, drop shadow={ashadow, color=brown!60!gray}]
\tikzstyle{sum} = [draw, fill=blue!20, circle, node distance=1cm, drop shadow={ashadow, color=brown!60!gray}]
\tikzstyle{input} = [coordinate]
\tikzstyle{output} = [coordinate]
\tikzstyle{pinstyle} = [pin edge={to-,thick,black}]
\begin{tikzpicture}[auto, node distance=2cm,>=latex']
    % We start by placing the blocks
    \node [input, name=input] {};
    \node [sum, right=2cm of input] (sum) {};
    \node [block, fill=brown!30, right=1.5cm of sum] (controller) {\footnotesize$\begin{array}{c}
         \text{Extremum-Seeking controller:}  \\ u=\sqrt{\alpha\omega}\cos\left(\omega t+ke^2\right) \rho(e)+\chi_s(\eta, \vartheta) \end{array}$};
    \node [block, fill=blue!15, right=3.3cm of controller, pin={[pinstyle]below:{\small External disturbances}},
            node distance=3cm] (system) {Plant};
    \draw [->] (controller) -- node[name=u]{$u$} (system);
    \node [output, right=1.5cm of system] (output) {};
    \node [block, fill=green!5, below=1cm of u] (filter0) {\footnotesize$\begin{array}{c}
         \text{Input filter:}  \\ \dot{\pi}=-\pi+u \end{array}$};
    \node [block, fill=brown!20, left =1.3cm of filter0] (IM) {\footnotesize$\begin{array}{c}
         \text{Internal model:}  \\ \dot{\eta}= M\eta + N\pi \end{array}$};
    \node [block, fill=green!15, below =0.5cm of IM] (estimator) {\footnotesize$\begin{array}{c}
         \text{Parameter estimator:}  \\ \dot{\vartheta}= -\Theta\eta[\eta^{\!\top}\vartheta-\pi] \end{array}$};

    % Once the nodes are placed, connecting them is easy. 
    \draw [draw,thick,->] (input) -- node[above]{$r=q(v,w)$}node[below]{\footnotesize Reference} node[below,pos=0.99] {$-$} (sum);
    \draw [thick,->] (sum) -- node {$e$} (controller);
    \draw [thick,->] (system) -- (output);
    \draw [thick,->] ($(system.east)+(0.5cm, 0cm)$) -- ($(system.east)+(0.5cm, 1cm)$) --($(system.east)+(-6cm, 1cm)$)node [below] {$y$}-|  (sum);
    \draw [thick,->] (u) -- (filter0); 
    \draw [thick,->] (filter0) --node[above]{$\pi$} (IM); 
    \draw [thick,->] (IM) --node[right]{\footnotesize$\eta$} (estimator);
    \draw [thick,->] ($(filter0.west)+(-0.5cm, 0cm)$) |- (estimator);
    \draw [thick,->] ($(IM.west)+(0cm, 0cm)$) -|node[right,near end]{\footnotesize$\eta$} ($(controller.south)+(-1.5cm, 0cm)$);
    \draw [thick,->] ($(estimator.west)+(0cm, 0cm)$) -|node[right,near end]{\footnotesize$\vartheta$} ($(controller.south)+(-2cm, 0cm)$);
\end{tikzpicture} 
\caption{Extremum-Seeking Nonlinear Regulator with Concurrent Uncertainties in
 Exosystems and Control Directions}\label{framework}
\end{figure*}
%%------------------------------------------

%%----------------------------------------
\section{Main Results} \label{section4} 

\subsection{Extremum-Seeking Control Design}
In this section, we proposed using an extremum-seeking control approach to handle the unknown control direction  (see Figure \ref{framework}).
Let $X=\col(\bar{z}, \bar{\eta}, \bar{\pi}, \bar{\vartheta},  e)$ and 
$$P(X,\mu)=\left[
          \begin{array}{@{}c@{}}
           F\left(w\right)\bar{z}+\bar{G}(e,w)e \\
            M  \bar{\eta} + \bar{\varepsilon}(\bar{\pi}, e)\\
             -\bar{\pi}-\bar{\delta}(\bar{z}, e,\mu)\\
             -\Theta\theta\left(\mu\right){\myr \theta\left(\mu\right)^{\!\top}}\bar{\vartheta}+ \bar{\gamma}(\bar{\eta}, \bar{\pi}, e,  \mu)\\
             \bar{g}(\bar{z}, e, \bar{\eta}, \bar{\vartheta},\mu)
          \end{array}
        \right].$$

\begin{thm}\label{thm1} Under Assumptions \ref{ass1}--\ref{ass4}, there exist a smooth positive function $\rho\left(\cdot\right)^2\geq 1$ and some sufficiently large positive constant ${k}$ and $\alpha$, and a dynamic output feedback controller
\begin{subequations}\label{escun}\begin{align}
{u}=&\ \sqrt{\alpha\omega}\cos\left(\omega t+ke^2\right) \rho(e)+\chi_s(\eta, \vartheta),\\
\dot{\eta}=&\ M\eta+N\pi,\\
  \dot{\pi}=&-\pi+u,\\
  \dot{\vartheta}=&-\Theta\eta[\eta^{\!\top}\vartheta-\pi],
\end{align}\end{subequations}
solves the robust PORP for the closed-loop system composed of \eqref{chisatu}, \eqref{aumet} and \eqref{escun}.
\end{thm}
\begin{proof} %The error dynamics \eqref{aumet} with the extremum-seeking control \eqref{escun} are given by
%\begin{subequations}\label{escun1}\begin{align}
 % \dot{\bar{z}} =& F\left(w\right)\bar{z}+\bar{G}(e,w)e, \\
%    \dot{\bar{\eta}}=& M  \bar{\eta} + \bar{\varepsilon}(\bar{\pi}, e),\\
%        \dot{\bar{\pi}} =&-\bar{\pi}-\bar{\delta}(\bar{z}, e,\mu),\\
%    \dot{\bar{\vartheta}} =&-\Theta\theta\left(\mu\right){\myr \theta\left(\mu\right)^{\!\top}}\bar{\vartheta}+ \bar{\gamma}(\bar{\eta}, e, \mu),\\
 %\dot{e}=&\bar{g}(\bar{z}, e, \bar{\eta}, \bar{\vartheta},\mu)+b(\mu)\sqrt{\alpha\omega}\cos\left(\omega t+ke^2\right) \rho(e).%\notag\\
%\end{align}\end{subequations}
%Let $X=\col(\bar{z}, \bar{\eta}, \bar{\pi}, \bar{\vartheta},  e)$ and 
%$$P(X,\mu)=\left[
%          \begin{array}{c}
 %           F\left(w\right)\bar{z}+\bar{G}(e,w)e \\
 %           M  \bar{\eta} + \bar{\varepsilon}(\bar{\pi}, e)\\
 %            -\bar{\pi}-\bar{\delta}(\bar{z}, e,\mu)\\
%             -\theta\left(\mu\right){\myr \theta\left(\mu\right)^{\!\top}}\bar{\vartheta}+ \bar{\gamma}(\bar{\eta}, e, \mu)\\
 %            \bar{g}(\bar{z}, e, \bar{\eta}, \bar{\vartheta},\mu)
%          \end{array}
%        \right].$$ 
The error dynamics \eqref{aumet} with the extremum-seeking control \eqref{escun} can be expanded as
\begin{align*}%\label{orignal-system}
\dot{X}=&\ P(X,\mu)+\left(\underbrace{\left[
                  \begin{array}{@{}c@{}}
                 \textbf{0}_{4\times 1}\\
b(\mu)\sqrt{\alpha\omega}\cos\left(ke^2\right)\rho(e)\\
                  \end{array}
                \right]}_{a_1(X)}\cos(\omega t)\right.\\
&\left.\underbrace{-\left[
                  \begin{array}{@{}c@{}}
                   \textbf{0}_{4\times 1}\\
b(\mu)\sqrt{\alpha\omega}\sin\left(ke^2\right)\rho(e)\\
                  \end{array}
                \right]}_{a_2(X)}\sin(\omega t)\right).
\end{align*}
Then, in line with the Lie bracket average formula from system \eqref{general} to \eqref{avgeneral}, we have the corresponding Lie-bracket averaged system as follows:
\begin{align*}
\dot{\tilde{X}}=&\  P(\tilde{X},\mu)\\
&+\frac{1}{T}[a_1(\tilde{X}), a_2(\tilde{X})]\int_{0}^{T}\int_{0}^{\theta}\cos(\omega \theta)\sin(\omega \tau)d\tau d\theta.
\end{align*}
where
\begin{align*}
[a_1(\tilde{X}), a_2(\tilde{X})]
&=2\omega\left[
                  \begin{array}{@{}c@{}}
                  \textbf{0}_{4 \times 1}\\
-k{\myr b(\mu)^2}{\myr\rho\left(\tilde{e}\right)^2}\tilde{e}{\alpha}\\
                  \end{array}
                \right],\\
\frac{1}{T}\int_{0}^{T}\int_{0}^{\theta}\cos(\omega \theta)\sin(\omega \tau)d\tau d\theta&={-\frac{1}{2\omega}\myr }.
\end{align*}
Then, we have the following averaged system
\begin{subequations}\label{aveescun1}\begin{align}
  \dot{\tilde{z}} &= F\left(w\right)\tilde{z}+\bar{G}(\tilde{e},w)\tilde{e}, \\
    \dot{\tilde{\eta}}&= M  \tilde{\eta} + \bar{\varepsilon}(\tilde{\pi}, e),\\
       \dot{\tilde{\pi}}& =-\tilde{\pi}-\bar{\delta}(\tilde{z},\tilde{ e},\mu),\\
    \dot{\tilde{\vartheta}}& =-\Theta\theta\left(\mu\right){\myr \theta\left(\mu\right)^{\!\top}}\tilde{\vartheta}+ \bar{\gamma}(\tilde{\eta}, \tilde{\pi}, \tilde{e}, \mu),\\
  \dot{\tilde{e}}&=\bar{g}(\tilde{z}, \tilde{e}, \tilde{\eta}, \tilde{\vartheta},\mu)-k{\myr b(\mu)^2}{\myr\rho\left(\tilde{e}\right)^2}\tilde{e}{\alpha}.%\notag\\
\end{align}\end{subequations}
Following Lemma \ref{lemmabodev}, we pose the Lyapunov function $ U_1:=U_1(\tilde{z}, \tilde{\xi})$ defined by
\begin{align*}
U_1(\tilde{z}, \tilde{\xi})=\epsilon_0V_0(\tilde{z})+V_1(\tilde{\xi}),
\end{align*}
where $\tilde{\xi}=\col(\tilde{\eta}, \tilde{\pi})$, $\epsilon_0 $ is any positive constant bigger than $\alpha_1+\beta_1/\alpha_0$ with $\alpha_0$, $\alpha_1$ and $\beta_1$ obtained from Property \ref{property1}. Then, it can be verified that $$\min\{\epsilon_0, 1\}( V_0 + V_1) \leq U_1\leq \max\{\epsilon_0, 1\}( V_0 + V_1).$$
From Property \ref{property1} in Lemma \ref{lemmabodev}, the time derivative of $U_1(t)$ along \eqref{aveescun1} can be evaluated as
\begin{align}\label{U1V0V1}
\dot{U}_1\leq& -\alpha_1( V_0 + V_1)+\delta_1(\tilde{e}^2)+\epsilon_0\delta_0(\tilde{e}^2)\nonumber\\
\leq& -\alpha_u U_1+\delta_u(\tilde{e}^2),
\end{align}
where $\alpha_u=\frac{\alpha_1}{\max\{\epsilon_0, 1\}}$ and $\delta_u(e^2)=\delta_1(\tilde{e}^2)+ \epsilon_0\delta_0(\tilde{e}^2)$.
We define a Lyapunov-like function $ U:= U (t,\tilde{z}, \tilde{\xi},\tilde{\vartheta}, \tilde{e})$ of the form
 \begin{align*}
 U(t)=\epsilon_1 U_1+\int_{0}^{U_1}\kappa(s)ds+\epsilon_2 V_2(\tilde{\vartheta})+\frac{1}{2}\tilde{e}^2
\end{align*}
 where the positive function $\kappa(\cdot)$ is specified later. From Property \ref{property1} in Lemma \ref{lemmabodev} and the equation \eqref{U1V0V1}, the time derivative of $U(t)$ along \eqref{aveescun1} can be evaluated as
\begin{align}
\dot{U}\leq& -\epsilon_1\alpha_u U_1+\epsilon_1\delta_u(\tilde{e}^2)+\kappa \circ U_1 \left[-\alpha_u U_1+\delta_u(\tilde{e}^2)\right]\nonumber\\
&-\epsilon_2\alpha_2(V_2) +\epsilon_2\delta_3(V_1)+\epsilon_2\delta_2(\tilde{e}^2)\nonumber\\
&+ 0.25\tilde{e}^2+\|\bar{g}(\tilde{z}, \tilde{e}, \tilde{\eta}, \tilde{\vartheta},\mu)\|^2-k{\myr b(\mu)^2}{\myr\rho\left(\tilde{e}\right)^2}\tilde{e}^2{\alpha}.\nonumber
\end{align}
From Property \ref{property2} in Lemma \ref{lemmabodev}, we have
\begin{align*}
\dot{U}\leq& -\epsilon_1\alpha_u U_1+\frac{\max\{\phi_0, \phi_1\}}{\min\{\epsilon_0, 1\}}U_1\nonumber\\
&+\kappa \circ U_1 \left[-\alpha_u U_1+\delta_u(\tilde{e}^2)\right] +\epsilon_2{\myr \delta_3(V_1)}\nonumber\\
&-[\epsilon_2-\phi_2]\alpha_2(V_2)+\epsilon_2\delta_2(\tilde{e}^2)+\delta_5(\tilde{e}^2)\nonumber\\
&+\epsilon_1\delta_u(\tilde{e}^2)+ 0.25\tilde{e}^2-k{\myr b(\mu)^2}{\myr\rho\left(\tilde{e}\right)^2}\tilde{e}^2{\alpha}.
\end{align*}
Since $\delta_3(0)=0$, by Lemma 7.8 in \cite{huang2004nonlinear} and from equation \eqref{V2alpha}, ${\myr\delta_3(s)}\leq \bar{\delta}_3(s) s$ where $ \bar{\delta}_3(s)=2\bar{c}_5(\underline{\alpha}_1^{-2}s+\underline{\alpha}_1^{-1})$ is a known smooth positive function. By using the change of supply rate technique in \cite{sontag1995changing} and Lemma 2.1 in \cite{xu2017global}, we can choose any smooth function $\kappa(s)$ such that $$\bar{\kappa}(s)>1+\epsilon_2\bar{\delta}_3(s)+\frac{\max\{\phi_0, \phi_1\}}{\min\{\epsilon_0, 1\}},$$ where $\bar{\kappa}(s)=\alpha_u\epsilon_1+\frac{\alpha_u}{2} \kappa\circ(\alpha_u s)$. {\myr Moreover, the definition of $U_1$ further implies  $U_1\geq V_1$}. Then we have
\begin{align}
\dot{U}\leq& -\bar{\kappa}(U_1)U_1+\bar{\delta}_u(\tilde{e}^2)\nonumber\\
& +\epsilon_2\bar{\delta}_3(U_1)U_1+\frac{\max\{\phi_0, \phi_1\}}{\min\{\epsilon_0, 1\}}U_1\nonumber\\
&-[\epsilon_2-\phi_2]\alpha_2(V_2)+\epsilon_2\delta_2(\tilde{e}^2)+\delta_5(e^2)\nonumber\\
&+ 0.25\tilde{e}^2-k{\myr b(\mu)^2}{\myr\rho\left(\tilde{e}\right)^2}\tilde{e}^2{\alpha}\nonumber\\
\leq&-U_1-[\epsilon_2-\phi_2]\alpha_2(V_2)+\bar{\delta}_u(\tilde{e}^2)\nonumber\\
&+\epsilon_2\delta_2(\tilde{e}^2)+\delta_5(\tilde{e}^2)+ 0.25\tilde{e}^2-k{\myr b(\mu)^2}{\myr\rho\left(\tilde{e}\right)^2}\tilde{e}^2{\alpha}.\nonumber
\end{align}
where $\bar{\delta}_u(\tilde{e}^2)=\left[\epsilon_1+\kappa\circ\left(2\delta_u(\tilde{e}^2)\right)\right]\delta_u(\tilde{e}^2)$.
Since $$\bar{\delta}_u(0)+\delta_2(0)+\delta_5(0)=0,$$ by using Lemma 11.1 in \cite{chen2015stabilization}, and from Lemma \ref{lemmabodev}, we have $$\bar{\delta}_u(s)+\delta_2(s)+\delta_5(s)\leq \Delta_M {\Delta}(s) s$$ for any $s\geq 0$ and known positive smooth function ${\Delta}(\cdot)$ and positive constant $\Delta_M$. Then, we have
\begin{align*}
\dot{U}\leq&-U_1-[\epsilon_2-\phi_2]\alpha_2(V_2)+{\myr \Delta_M\myr} {\Delta}\left(\tilde{e}^2\right) \tilde{e}^2\nonumber\\
&+ 0.25\tilde{e}^2-k{\myr b(\mu)^2}{\myr\rho\left(\tilde{e}\right)^2}\tilde{e}^2{\alpha}.
\end{align*}
Letting the smooth function $\rho(\cdot)$, and the positive numbers $\epsilon_2$ and ${k}$ be such that  ${\myr \rho(\tilde{e})^2\geq \max \{1, {\Delta}\left(\tilde{e}^2\right)\} }$, $\epsilon_2\geq\phi_2+1$ and $k\alpha\geq \frac{\myr 1.25+\Delta_M}{{\myr b(\mu)^2}}$, gives
\begin{align}\label{strlyafun}\dot{U}\leq -U_1-\alpha_2(V_2)-\tilde{e}^2.\end{align}
Since $U(t, \tilde{z}, \tilde{\xi},\bar{\vartheta}, \tilde{e})$ is positive definite, radially unbounded and satisfies inequality \eqref{strlyafun},
it follows that system \eqref{aveescun1} is globally uniformly asymptotically stable.
By using Corollary 1 in \cite{scheinker2012minimum} and Lemma \ref{morelemm}, we have that the error dynamics \eqref{aumet} with the extremum-seeking control \eqref{escun} is $\frac{1}{\omega}$-semi-globally uniformly asymptotically stable, which further implies that
there exists a constant $\nu(\frac{1}{\omega})$ and a $\omega^*$ such that for all initial conditions in some compact set and $ v(0)\in \mathds{V}$ and $\omega> \omega^*$, the nominal trajectories are such
that $$\|\col( \bar{z}, \bar{\xi},\bar{\vartheta}, \bar{e})-\col( \tilde{z}, \tilde{\xi},\tilde{\vartheta}, \tilde{e})\|<\nu(1/\omega).$$ This completes the proof.
%As a result, $\lim\limits_{t\rightarrow\infty}\textnormal{\col}( \tilde{z}, \tilde{\xi},\tilde{\vartheta}, \tilde{e})=0$ exponentially.
\end{proof}
%in \cite{khalil2002nonlinear}

\begin{rem}\myr  The original error system \eqref{aumet} is subject to unknown uncertainties, denoted by $\mu$, and an unknown control direction $b(\mu)$ with unknown time-varying coefficients.
By employing an extremum-seeking control (ESC) approach and Lie bracket averaging technique, system \eqref{aumet} is averaged to system \eqref{aveescun1} with a positive control direction $b(\mu)^2$, omitting high-order terms. \end{rem}
\begin{rem}\myr
The nonlinear component in controller \eqref{escun} can potentially generate inputs that exceed the actuator's input range, leading to high-gain feedback. 
To address this issue, motivated by the extremum-seeking control approach proposed in \cite{scheinker2012minimum,dehaan2005extremum}, we introduce the following controller \eqref{escun2} that leverages the properties of trigonometric functions. 
This proposed controller eliminates the requirement for dynamic gain, mitigates the risk of high-gain effects, and ensures bounded control actions.
 \end{rem}
\begin{thm} Under Assumptions \ref{ass1}--\ref{ass4}, there exist smooth a positive function $\rho \left(\cdot\right)\geq 1$ and some sufficiently large positive constants ${k}$ and $\alpha$, and a dynamic output feedback controller
\begin{subequations}\label{escun2}\begin{align}
{u}=&\ \sqrt{\alpha\omega}\cos\left(\omega t+k\int_{0}^{e^2} \rho(s)ds\right)+\chi_s(\eta, \vartheta),\\
\dot{\eta}=&\ M\eta+N\pi,\\
  \dot{\pi}=&-\pi+u,\\
  \dot{\vartheta}=&-\Theta\eta[\eta^{\!\top}\vartheta-\pi],
\end{align}\end{subequations}
solves the robust PORP for the closed-loop system composed of \eqref{chisatu}, \eqref{aumet} and \eqref{escun2}.
\end{thm}
\begin{proof} %The error dynamics with the extremum-seeking control \eqref{escun2} are given by
%\begin{subequations}\label{escun12}\begin{align}
%  \dot{\bar{z}} =& F\left(w\right)\bar{z}+\bar{G}(e,w)e, \\
%    \dot{\bar{\eta}}=& M  \bar{\eta} + \bar{\varepsilon}(\bar{\pi}, e),\\
%        \dot{\bar{\pi}} =&-\bar{\pi}-\bar{\delta}(\bar{z}, e,\mu),\\
%    \dot{\bar{\vartheta}} =&-\Theta\theta\left(\mu\right){\myr \theta\left(\mu\right)^{\!\top}}\bar{\vartheta}+ \bar{\gamma}(\bar{\eta}, e, \mu),\\
% \dot{e}=&\bar{g}(\bar{z}, e, \bar{\eta}, \bar{\vartheta},\mu)\nonumber\\
% &+b(\mu)\sqrt{\alpha\omega}\cos\Big(\omega t+k\int_{0}^{e^2} \rho(s)ds\Big).
%\end{align}\end{subequations}

The error dynamics \eqref{aumet} with the extremum-seeking control \eqref{escun2} can be expanded as
\begin{align*}
\dot{X}=&\  P(X,\mu)+\left(\underbrace{\left[
                  \begin{array}{@{}c@{}}
                                       \textbf{0}_{4\times 1} \\
b(\mu)\sqrt{\alpha\omega}\cos\left(k\int_{0}^{e^2} \rho(s)ds\right)\\
                  \end{array}
                \right]}_{b_1(X)}\cos(\omega t)\right.\\
&\left.\underbrace{-\left[
                  \begin{array}{@{}c@{}}
                   \textbf{0}_{4\times 1} \\
b(\mu)\sqrt{\alpha\omega}\sin\left(k\int_{0}^{e^2} \rho(s)ds\right) \\
                  \end{array}
                \right]}_{b_2(X)}\sin(\omega t)\right).
\end{align*}
Then, the corresponding Lie-bracket averaged system can be calculated as
\begin{align*}
\dot{\tilde{X}}=&\ P(\tilde{X},\mu)+\frac{1}{T}[b_1(\tilde{X}), b_2(\tilde{X})]\int_{0}^{T}\int_{0}^{\theta}\cos(\omega \theta)\sin(\omega \tau)d\tau d\theta.
\end{align*}
where
\begin{align*}
[b_1(\tilde{X}), b_2(\tilde{X})]
&=2\omega\left[
                  \begin{array}{@{}c@{}}
                  \textbf{0}_{4\times 1} \\
-k{\myr b(\mu)^2}\rho\left(\tilde{e}^2\right)\tilde{e}{\alpha} \\
                  \end{array}
                \right],\\
\frac{1}{T}\int_{0}^{T}\int_{0}^{\theta}\cos(\omega \theta)\sin(\omega \tau)d\tau d\theta&=-\frac{1}{2\omega}.
\end{align*}
Then, we have the following averaged system
\begin{align*}%\begin{subequations*}\label{aveescun12}
  \dot{\tilde{z}} &= F\left(w\right)\tilde{z}+\bar{G}(\tilde{e},w)\tilde{e}, \\
    \dot{\tilde{\eta}}&= M  \tilde{\eta} + \bar{\varepsilon}(\tilde{\pi}, e),\\
       \dot{\tilde{\pi}}& =-\tilde{\pi}-\bar{\delta}(\tilde{z},\tilde{ e},\mu),\\
    \dot{\tilde{\vartheta}} &=-\Theta\theta\left(\mu\right){\myr \theta\left(\mu\right)^{\!\top}}\tilde{\vartheta}+ \bar{\gamma}(\tilde{\eta}, \tilde{\pi}, \tilde{e}, \mu),\\
  \dot{\tilde{e}}&=-k\alpha {\myr b(\mu)^2}\rho\left(\tilde{e}^2\right)\tilde{e}+\bar{g}(\tilde{z}, \tilde{e}, \tilde{\eta}, \tilde{\vartheta},\mu).%\\
%=&\bar{g}\left(\bar{z},\bar{\eta},e,\mu\right)+ b\left(-\frac{k_g}{a}e\dot{e}\sin(\omega t)+ a\sin(\omega t)+c\right)e\\
%=&\bar{g}\left(Z,e,\mu\right)+ b\left(-\frac{k_g}{a}\dot{h}\sin(\omega t)+ a\sin(\omega t)+c\right)e
\end{align*}%\end{subequations}
 The rest of the proof proceeds following
the developments from the proof of Theorem \ref{thm1}. It is thus
omitted for the sake of brevity.
\end{proof}
\begin{rem} From the development, the steady-state input $\mathbf{u}(v,w,\sigma)$ is a function of the system dynamics and the exosystem  with concurrent uncertainties in
the exosystem and the control direction. Therefore, the computation of an explicit solution of the internal mode for the steady-state generator \eqref{xi-exosystem} would be extremely difficult or impossible. This could be addressed by considering learning techniques such as in \cite{zisis2021control,wang2023nonparametric}. By employing the non-adaptive and non-Nussbaum-type framework, we can avoid the need for an explicit solution to the internal model.
    \end{rem}
 
\section{Simulation Example}\label{section5}
   \begin{figure}[ht]
 \epsfig{figure=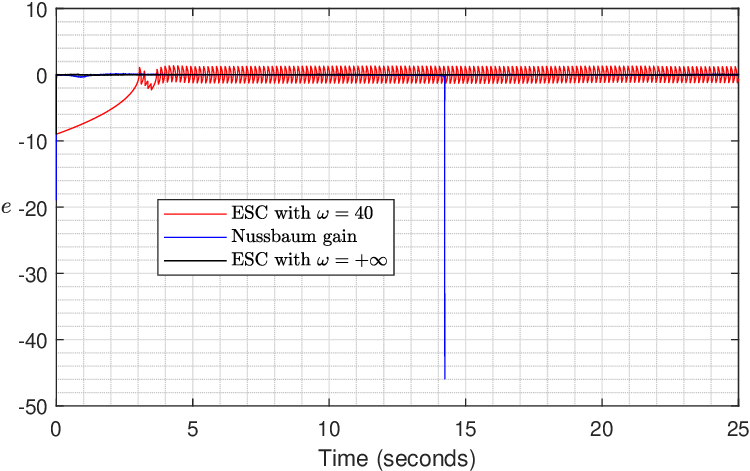,width=0.48\textwidth}
 \caption{Tracking error in terms of the Nussbaum gain technique \eqref{nussbaum} and ESC approach \eqref{escun} over different frequency}\label{fige}
\end{figure}
\begin{figure}[ht]
 \epsfig{figure=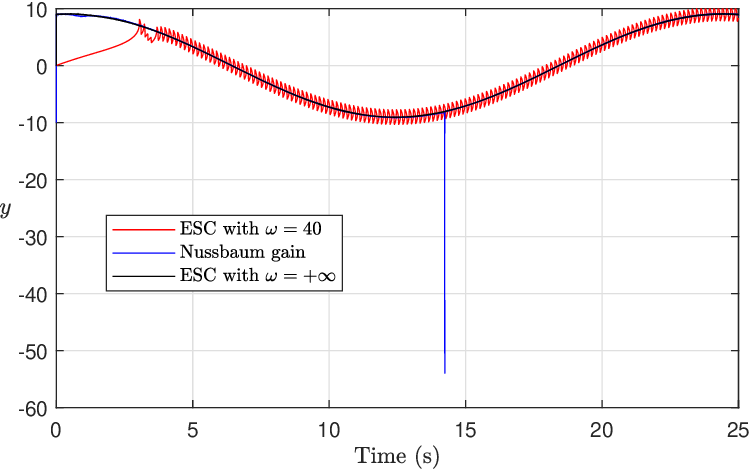,width=0.48\textwidth}
\caption{Trajectory $y$ in terms of the Nussbaum gain technique \eqref{nussbaum} and ESC approach \eqref{escun} over different frequency}\label{figy}
\end{figure}
   \begin{figure}[ht]
\epsfig{figure=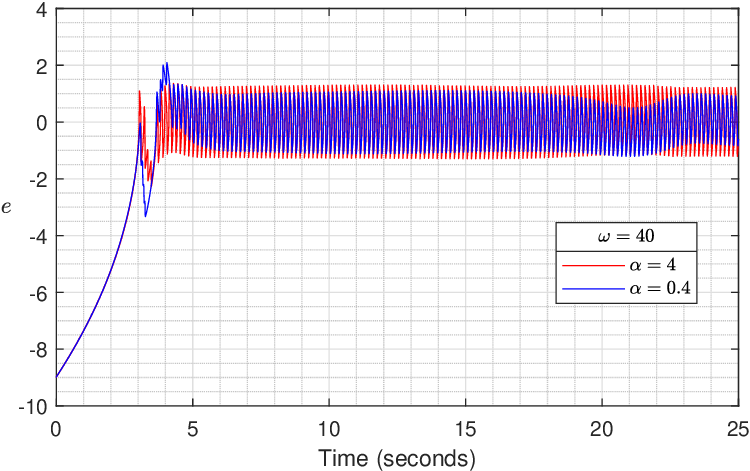,width=0.48\textwidth}
\caption{Tracking error subject to ESC approach \eqref{escun} in terms of different $\alpha$}\label{figalpha}
%\end{figure}
\end{figure}
  \begin{figure}[ht]
 \epsfig{figure=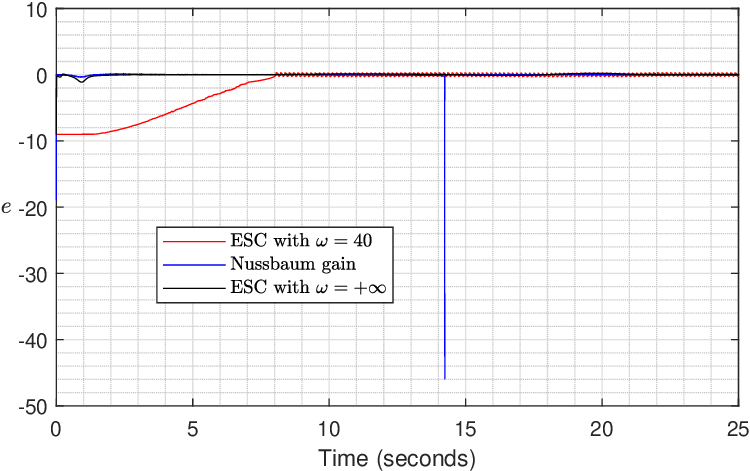,width=0.48\textwidth}
 \caption{Tracking error in terms of the Nussbaum gain technique \eqref{nussbaum} and ESC approach \eqref{escun2} over different frequency}\label{fige2}
 \end{figure}
\begin{figure}[ht]
  \epsfig{figure=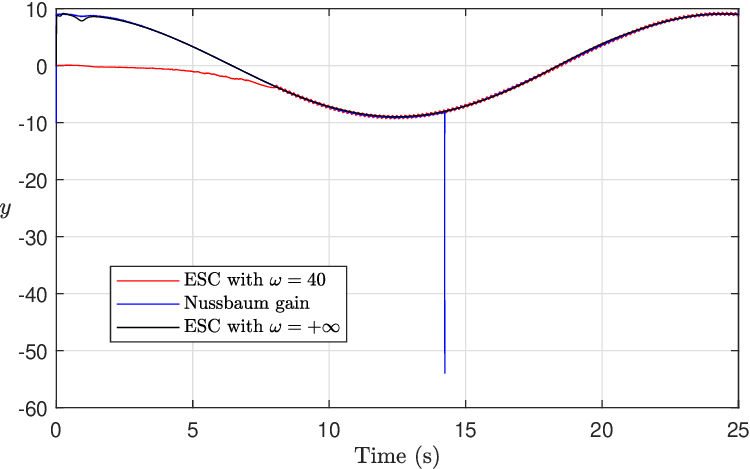,width=0.48\textwidth}
\caption{Trajectory $y$ in terms of the Nussbaum gain technique \eqref{nussbaum} and ESC approach \eqref{escun2} over different frequency}\label{figy2}
\end{figure}

   \begin{figure}[ht]
\epsfig{figure=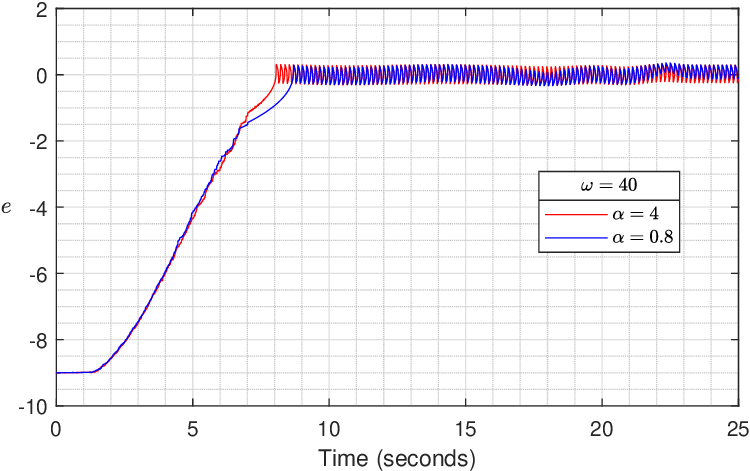,width=0.48\textwidth}
\caption{Tracking error subject to ESC approach \eqref{escun2} in terms of different $\alpha$}\label{figalpha2}
%\end{figure}
\end{figure}

In this simulation example, we consider the following nonlinear output feedback system, taken from \cite{liu2009parameter},
\begin{align}%\label{numerica1}
\dot{z}&=\left[
                \begin{array}{@{}cc@{}}
                  -1 & 0\\
                  0& -1 \\
                \end{array}
              \right]z +\left[
                           \begin{array}{@{}c@{}}
                             (\sin(y-v_1))^2y \\
                            y \\
                           \end{array}
                         \right],\nonumber\\
 \dot{y}&=\left[0, 1\right]z -w_1y-w_2y^3 + b(v,w)u,  \nonumber\\
 e &=y-v_1,\label{example-put-feedback}
\end{align}%\end{subequations}
where $z =\textnormal{\col}\left(z_{1},z_{2}\right)$ and $y$ are the state variables, $w=\col(w_1, w_2$) is the unknown parameter vector, and $b(v,w)$ is the time-varying coefficient.
We assume that the uncertainty $w \in \mathds{W}\subseteq\mathds{ R}^2$. The following exosystem generates the signal $v$:
\begin{align*}
\dot{v}=\left[
       \begin{array}{@{}cc@{}}
         0 & \sigma \\
         -\sigma & 0\\
       \end{array}
     \right]v.
\end{align*}
The regulated error is defined as $e =y -v_{1}$. 
Assume that $b(v,w) =-4+0.05v_1$ and $\sigma=\pi/12$. The system can be shown to satisfy all the assumptions in \cite{liu2009parameter}. The controllers \eqref{escun} and \eqref{escun2} are designed with $k=1.5$, $\alpha=4$,
$m=\col(24,50,35,10)$ and $\Theta=10$.
The simulation is conducted with the following initial conditions: $v(0)=\col(9,1)$, $\eta(0)=\col(0.1589;0.0622;0.1057;0.0331)$, $\pi(0)=0$ and $\col\left(x_{1i},x_{2i},y \right)=(0,0,0)$. All other initial conditions in the controller are set to zero. The uncertain parameter is $w =\textnormal{col}\left(9, 1\right)$.

The extremum seeking controllers \eqref{escun}, \eqref{escun2} are compared to Nussbaum gain schemes for system \eqref{example-put-feedback}. The closed-loop system with the Nussbaum gain is given by
\begin{subequations}\label{nussbaum}\begin{align}
{u}&={\myr \mathcal{N}(k_n)}\rho(e)e+\chi_s(\eta, \vartheta),\\
\dot{k}_n&=\rho(e)e^2\\
\dot{\eta}&=M\eta+N\pi,\\
  \dot{\pi}&=-\pi+u,\\
  \dot{\vartheta}&=-\Theta\eta[\eta^{\!\top}\vartheta-\pi],
\end{align}\end{subequations}
where ${\myr\mathcal{N}(k_n)}=k_n^2\cos(k_{n})$ is a type
of Nussbaum function as described in
\cite{nussbaum1983some} and \cite{liu2008global}.
%\begin{figure}
%\epsfig{figure=error-es-nus.eps,width=0.5\textwidth}
%\caption{Trajectory $y$ over different frequency subject to the controller \eqref{escun2}}\label{figy2}
%\end{figure}

The resulting closed-loop trajectories using the Nussbaum gain and the ESC control system are shown in Figures \ref{fige}, \ref{figy}, \ref{fige2} and \ref{figy2}. The ESC control system is tested at varying frequencies. Figure \ref{fige} shows the trajectories of $e =y -v_{1}$  for the Nussbaum gain technique \eqref{nussbaum} and the ESC approach \eqref{escun} with $\rho(s)=s^2+20$.
Figure \ref{figy} shows the trajectory of $y$.
%Figure \ref{figpara} shows the trajectory of $\vartheta(t) $ over different frequencies subject to \eqref{escun}.
Figure \ref{fige2} shows the trajectory of $e=y -v_{1}$ with $\rho(s)=s+20$. The trajectory of $y$ is shown in Figure \ref{figy2}.
From Figures \ref{fige} and \ref{fige2}, the large overshoot phenomenon can be observed, even when the suitable equilibrium has been reached, when the Nussbaum gain technique \eqref{nussbaum} is used. These large deviations are not observed for the extremum-seeking control approach. It is important to note that the extremum-seeking control approach can require larger frequencies, which may be undesirable in some applications. To overcome this, one can choose a smaller value of the parameter $\alpha$ to offset the need for larger frequencies. In Figures \ref{figalpha} and \ref{figalpha2}, the impact of choosing smaller values for $\alpha$ is demonstrated.  It is seen that more precise convergence to the equilibrium can be achieved by reducing $\alpha$ at a fixed frequency.

\section{Conclusion}\label{section6}
This paper has studied the practical robust output regulation problem of a class of nonlinear systems subject to unknown control directions and an uncertain exosystem.
By employing an extremum-seeking control approach, we proposed control laws that handle the robust practical output regulation problem subject to unknown control directions with time-varying coefficients.
An analysis of robust non-adaptive stabilization problems is performed for an augmented system with iISS inverse dynamics.
%
% neutrally stable exosystem whose
%frequencies are not known a priori
%
The stability of the non-adaptive output regulation design via a Lie bracket averaging technique is demonstrated. A uniform ultimate boundedness of the closed-loop signals is guaranteed.
%
%By the convergence property, the estimation and tracking errors are shown to converge to zero exponentially if the frequency of the dither signal goes to infinity. 
%
%This implies the estimated parameter vector to the true parameter vector
%
%
{\myr It is shown that the proposed method can address an output regulation problem with unknown control directions and an uncertain exosystem without utilizing the Nussbaum-type gain technique, thereby strengthening the leading approach of the existing framework proposed in \cite{liu2006global,guo2016cooperative}.}
%
%Finally, a numerical example with unknown coefficients was
%provided to illustrate the validity of the theoretical results.

\bibliographystyle{ifacconf}% plainnat elsarticle-harv ifacconf
\bibliography{myref}
%\bibliographystyle{IEEEtran}
%\bibliography{myref}
\end{sloppypar}
\end{document}